\def \mcO{\mathcal{O}}
\def \mcF{\mathcal{F}_{D,A,V}}
\def \mcE{\mathcal{E}}
\def \mcQ{Q}
\def \tildeQ{\widetilde{Q}}
\def \mbP{\mathbb{P}}
\def \mcI{\mathcal{I}_{Z(V)}}
\def \mcU{\mathcal{U}}
\def \mcL{L}
\def \tilde*F{\widetilde{\mathcal{F}}}
\def \tildeU{\widetilde{U}}
\newcommand{\xra}{\xrightarrow}
\newcommand{\trm}{\textrm} 
\newcommand{\lr}{\longrightarrow}
\newcommand{\hra}{\hookrightarrow}
\newcommand{\mbb}{\mathbb}
\def \mcID{\mathcal{I}_{Z_D(V)}}
\newtheorem{stabonprojintro}{Theorem}[section]
\newtheorem{everycaseintro}[stabonprojintro]{Theorem}
\newtheorem{calabiyauintro}[stabonprojintro]{Theorem}
\newtheorem{Lefschetz}{Remark}[section]
\newtheorem{prelim}[Lefschetz]{Proposition}
\newtheorem{stabonproj}{Remark}[section]
\newtheorem{flatHtoD}[stabonproj]{Lemma}
\newtheorem{pbfromhpisstable}[stabonproj]{Lemma}
\newtheorem{reflexstab}[stabonproj]{Lemma}
\newtheorem{alterpf}[stabonproj]{Corollary}
\newtheorem{exceptions}[stabonproj]{Remark}
\newtheorem{notsemist}[stabonproj]{Lemma}
\newtheorem{FonD}[stabonproj]{Remark}
\newtheorem{hpstable}{Remark}[section]
\newtheorem{Kernelbundle}[hpstable]{Proposition}
\title{Lazarsfeld-Mukai Reflexive Sheaves and their Stability}
\author[P. Narayanan]{Poornapushkala Narayanan}
\address{Department of Mathematics, Indian Institute of Technology Madras, Chennai - 600036.}
\email{poorna.p.narayanan@gmail.com}
\thanks{Mathematics Classification numbers: 14C20, 14J60, 14M10}
\keywords{(Semi)stability, Reflexive sheaves.}
\begin{document}

\begin{abstract}
  Consider an ample and globally generated line bundle $L$ on a smooth
  projective variety $X$ of dimension $N\geq 2$ over $\mathbb{C}$. Let
  $D$ be a smooth divisor in the complete linear system of $\mcL$. We
  construct reflexive sheaves on $X$ by an elementary transformation
  of a trivial bundle on $X$ along certain globally generated
  torsion-free sheaves on $D$. The dual reflexive sheaves are called
  the \emph{Lazarsfeld-Mukai} reflexive sheaves. We prove the
  $\mu_{\mcL}$-(semi)stability of such reflexive sheaves under certain
  conditions.
\end{abstract}

 \maketitle
 \section{Introduction}
 \emph{Lazarsfeld-Mukai} bundles were introduced by Lazarsfeld
 \cite{RL} and Mukai \cite{Mu} in the 1980s. They are an important
 class of vector bundles obtained from certain elementary
 transformations and have found applications in studying syzygies and
 Brill-Noether theory. These bundles play a crucial role in
 Lazarsfeld's proof of the Gieseker-Petri theorem \cite{RL} and
 Voisin's proof of the generic Green's conjecture \cite{CV1,CV2}.
 
 Suppose $X$ is a smooth projective surface over $\mathbb{C}$, and $C$
 is a smooth, irreducible curve on $X$. Consider a globally generated
 line bundle $A$ on $C$. Denote by $i_*A$, the direct image of $A$ on
 $X$ where $i:C\hra X$ is the inclusion. Then $i_*A$ is a globally
 generated coherent sheaf on $X$. We thus have the following exact
 sequence on $X$ where the kernel $F$ is a vector bundle:
 $$0\lr F\lr H^0(A)\otimes\mcO_X\xra{ev} i_*A\lr 0\,.$$
 The dual of $F$ is called the Lazarsfeld-Mukai bundle on $X$
 associated to the pair $(C,A)$. Lelli-Chiesa \cite{ML} has studied
 the (semi)stability of the Lazarsfeld-Mukai bundles on K3-surfaces,
 and similar results have been obtained by us on abelian surfaces
 \cite{NP}. Also, \cite{AP} and references therein give a general
 survey of Lazarsfeld-Mukai bundles with other applications.

 In this article we generalize the above construction to higher
 dimensional varieties. We in fact obtain reflexive sheaves as
 kernels. We study their $\mu$-(semi)stability properties in various
 cases. This construction also enables us to obtain on any smooth
 projective variety $X$, semistable vector bundles $E$ with
 $\trm{rank}\,E=\trm{dim}\,X$.

 Suppose $X$ is a smooth projective variety of dimension $N\geq 2$
 over $\mathbb{C}$. Let $D\xhookrightarrow{i} X$ be a smooth,
 irreducible divisor on $X$ and $A$ be an ample and globally generated
 line bundle on $D$. Consider a general subspace $V\subset H^0(D,A)$
 of dimension $r\geq 2$.  Let $Z(V)\hra D$ be the closed subscheme
 defined by the vanishing of sections of $V$ and $\mcI\subset\mcO_D$
 be its ideal sheaf. Then $A\otimes\mcI$ is a globally generated
 torsion-free sheaf on $D$. We have the following short exact
 sequence, which defines the sheaf $\mcF$ associated to the triple
 $(D,A,V)$ on $ X$:
 \[0\lr\mcF\lr V\otimes\mcO_X\lr i_*(A\otimes\mcI)\lr 0\,.\] The
 kernel $\mcF$ is a reflexive sheaf of rank $r$ on $X$, whose dual is
 called the \emph{Lazarsfeld-Mukai} reflexive sheaf (see
 $\mathcal{x}\,$\ref{construct}).

 We remark that the same construction can be carried out under the
 weaker assumption that $D$ is just reduced and irreducible but not
 necessarily smooth. But for the purpose of this paper, we confine
 ourselves mainly with smooth and irreducible divisors $D$.

 The $\mu$-(semi)stability properties of the sheaves $\mcF$ are
 studied in $\mathcal{x}\,$\ref{studystab}. The first case we consider
 is that of a variety $X$ whose Picard group is cyclic.
\begin{stabonprojintro}\label{stabonprojintro}
  Suppose $X$ is an irreducible smooth projective variety over
  $\mbb{C}$ of dimension $N\geq 2$, such that
  $\emph{Pic}\,X=\mathbb{Z}\cdot [H]$, where $[H]$ is the class of an
  ample divisor. Let $D\in|\mcO_X(H)|$ and $A$ be an ample, globally
  generated line bundle on $D$. Consider $V\subset H^0(D,A)$, an
  $r$-dimensional subspace where $r\geq 2$. Then the reflexive sheaf
  $\mcF$ is $\mu_{H}$-stable.
\end{stabonprojintro}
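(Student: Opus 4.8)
The plan is to prove $\mu_H$-stability of the rank-$r$ reflexive sheaf $\mcF$ by the standard strategy: since $\operatorname{Pic}X = \mathbb{Z}\cdot[H]$, every subsheaf and every quotient has a well-defined integer slope, and I must show that every saturated subsheaf $\mcG\subset\mcF$ of rank $s$ with $0<s<r$ satisfies $\mu_H(\mcG)<\mu_H(\mcF)$. First I would compute the slope of $\mcF$ itself. From the defining sequence $0\to\mcF\to V\otimes\mcO_X\to i_*(A\otimes\mcI)\to 0$, taking determinants (equivalently comparing first Chern classes), the torsion sheaf $i_*(A\otimes\mcI)$ is supported on $D\in|\mcO_X(H)|$, so $c_1(\mcF) = -c_1(i_*(A\otimes\mcI))$ and the $H$-degree contribution comes from the divisor class of $D$, giving $c_1(\mcF)=-[H]$ and hence $\mu_H(\mcF) = -H^N/r < 0$. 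Because $\mcF\hookrightarrow V\otimes\mcO_X$ is a subsheaf of a trivial bundle, $\mcF$ (and every subsheaf of it) has no global sections forcing subsheaves to have $c_1 \le 0$, which is the key leverage.

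Let me outline the main mechanism. Let $\mcG\subset\mcF$ be saturated of rank $s$, and set $c_1(\mcG) = k[H]$ for some integer $k$, so $\mu_H(\mcG) = kH^N/s$. I want $k/s < -1/r$, i.e. $k \le -1$ will already dominate unless $s$ is large, so the delicate range is small negative $k$. The composite $\mcG\hookrightarrow\mcF\hookrightarrow V\otimes\mcO_X$ exhibits $\mcG$ as a rank-$s$ subsheaf of the trivial bundle; dualizing or taking the top exterior power, $\det\mcG$ admits a nonzero map to $\mathcal{O}_X$ (via $\wedge^s$ of the inclusion into $\wedge^s(V\otimes\mcO_X) = \mathcal{O}_X^{\binom{\dim V}{s}}$), which forces $k = \deg_H\det\mcG \le 0$. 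If $k=0$ then $\det\mcG$ is trivial along curves and the $\wedge^s$ map would split off a trivial rank-one summand mapping into $\mcF$; since $\mcF$ is a subsheaf of the trivial bundle with $H^0(\mcF)=0$ (as $\mcF$ has no trivial quotient — sections of $\mcF$ would be sections of $V\otimes\mcO_X$ mapping to zero in $i_*(A\otimes\mcI)$, i.e.\ sections of $V$ vanishing on the globally generated $A\otimes\mcI$, which must vanish since $V$ is general), one rules out $k=0$. Thus $k\le -1$, giving $\mu_H(\mcG) \le -H^N/s$, and comparing with $\mu_H(\mcF)=-H^N/r$ we need $-1/s < -1/r$, i.e.\ $s<r$, which holds strictly.

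The one genuinely delicate point is handling the borderline where the naive inequality $-1/s < -1/r$ is strict but could degrade; in fact since $s<r$ strictly we get $1/s > 1/r$, so $-1/s < -1/r$ and the strict inequality $\mu_H(\mcG)<\mu_H(\mcF)$ holds automatically once $k\le -1$ is established. Hence the entire argument reduces to the claim that no saturated proper subsheaf $\mcG$ can have $c_1(\mcG)=0$. I would prove this by arguing that a subsheaf of the trivial bundle with vanishing $H$-degree and the saturation property must contain $\mathcal{O}_X$ as a subsheaf (restricting to a general complete-intersection curve $C\in|H|^{N-1}$ where $\mcF|_C$ is a vector bundle and using that a degree-zero subbundle of a trivial bundle over a curve with negative-degree ambient quotient slope forces flat/trivial sub-line-bundles), and then showing that $H^0(\mcF)=0$ from the genericity of $V\subset H^0(D,A)$ together with global generation of $A\otimes\mcI$. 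The main obstacle is precisely verifying $H^0(\mcF)=0$ rigorously, i.e.\ that the evaluation $V\otimes\mcO_X\to i_*(A\otimes\mcI)$ is injective on global sections; this uses that $V$ is a general $r$-dimensional subspace so that no nonzero element of $V$ vanishes identically on $D$ after restriction to $Z(V)$, which I expect the paper's genericity hypothesis on $V$ to supply.
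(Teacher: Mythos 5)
Your reduction of stability to a single claim --- that no saturated subsheaf $\mathcal{G}\subset\mathcal{F}_{D,A,V}$ of rank $s<r$ can have $c_1(\mathcal{G})=0$ --- is sound: the wedge-product argument giving $c_1(\mathcal{G})\le 0$ works, and once $c_1(\mathcal{G})\le -[H]$ the strict slope inequality is automatic since $s<r$. This is precisely the dual of what the paper proves (it works with torsion-free quotients $Q$ of $\mathcal{E}=\mathcal{F}_{D,A,V}^{\vee}$ rather than subsheaves of $\mathcal{F}_{D,A,V}$). However, the two steps you leave as sketches are exactly where the content lies, and both have genuine holes. First, the $c_1=0$ exclusion, which is the heart of the proof: restricting to a general complete-intersection curve $C$ does show that $\mathcal{G}|_C$ is a constant subbundle $W_C\otimes\mathcal{O}_C\subset V\otimes\mathcal{O}_C$, but this produces one subspace $W_C\subset V$ \emph{per curve}; to conclude that $\mathcal{G}$ contains a trivial subsheaf (equivalently, generically equals a fixed $W\otimes\mathcal{O}_X$), you must show the $W_C$ are independent of $C$ and glue, and that gluing argument is entirely missing. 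The paper does the analogous step on the dual side by choosing $s$ sections of $\mathcal{O}_X^r$ that form a basis of $\widetilde{Q}$ at the generic point, showing the resulting map $\mathcal{O}_X^s\to\widetilde{Q}$ has zero kernel and cokernel supported in codimension $\ge 2$, and then invoking the extension property of reflexive sheaves (\cite[Proposition 1.6 (iii)]{RH}); in your setting one could instead note that $\det\mathcal{G}\simeq\mathcal{O}_X$ forces the classifying map from the big open set of $X$ to the Grassmannian $G(s,V)$ to have constant Pl\"ucker coordinates, hence to be constant. Without some such argument, your proof of the central claim is a plan, not a proof.

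Second, you misidentify $H^0(X,\mathcal{F}_{D,A,V})=0$ as ``the main obstacle'' and propose to obtain it from genericity of $V$. This is both unnecessary and harmful: the theorem is asserted for \emph{every} $r$-dimensional subspace $V\subset H^0(D,A)$, with no genericity hypothesis, so a proof that needs $V$ general establishes a strictly weaker statement. In fact the vanishing is immediate: taking global sections of the defining sequence gives $0\to H^0(\mathcal{F}_{D,A,V})\to V\to H^0(D,A\otimes\mathcal{I}_{Z(V)})$, and the second map is the tautological inclusion --- every $v\in V$ vanishes on $Z(V)$ by the very definition of $Z(V)$, so $V\subset H^0(D,A\otimes\mathcal{I}_{Z(V)})\subset H^0(D,A)$ --- which is injective for trivial reasons; this is Proposition \ref{prelim} (d). Relatedly, your early assertion that a subsheaf of a trivial bundle ``has no global sections'' is false as stated (consider $\mathcal{O}_X\subset\mathcal{O}_X^{\oplus 2}$); what is true, and what you actually use, is that such subsheaves have $c_1\le 0$.
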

Any $D\in|\mcO_X(H)|$ is reduced, irreducible and
Cohen-Macaulay. Hence we can consider reflexive sheaves $\mcF$ for all
such $D$. In the specific case when $X$ is the projective space, we
have the following theorem.
\begin{everycaseintro}\label{finalprojintro}
  Suppose $X=\mbb{P}^N_{\mbb{C}}$ for $N\geq 2$. Consider
  $\mcL=\mcO(d)$ with $d>0$ on $X$.  Let $D\in |\mcL|$ be a
  \emph{general} smooth, irreducible hypersurface and $A$ be an ample,
  globally generated line bundle on $D$.  Suppose that $V$ is a
  \emph{general} $r$-dimensional subspace of $H^0(D,A)$, where
  $r\geq 2$.  Then the following table summarizes the conditions on
  $L$, $A$ and $r$ under which the sheaves $\mcF$ are
  $\mu_{\mcO(1)}$-(semi)stable.
\begin{center}
\begin{tabular}{|c|c|c|c|c|}
\hline
 & $L$ & which $A$ & $r$ &  {stability}  \\
\hline
(a) & $L=\mcO(1)$ & all $A$ & $r\geq 2$ & $\mu_{\mcO(1)}$-stable \\
(b) & $L=\mcO(2)$ & all $A$ & $r=2$ & $\mu_{\mcO(1)}$-semistable \\
(c) & $L=\mcO(2l)$ & $A=\mcO(l)|_D$ & $r=2$ & $\mu_{\mcO(1)}$-semistable \\
(d) & $L=\mcO(d)$ & $A=\mcO(md)|_D$  & $2\leq r\leq {N-1+m\choose m}$ & $\mu_{\mcO(1)}$-semistable \\
  & when $d>1$    &        &                                         &  \\
\hline
\end{tabular}
\end{center}
\end{everycaseintro}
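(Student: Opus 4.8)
The plan is as follows. Part (a) is immediate from Theorem~\ref{stabonprojintro}: since $\mbP^N$ has $\trm{Pic}=\mbb{Z}\cdot[\mcO(1)]$ and $D\in|\mcO(1)|$ lies in the generating linear system, that theorem already yields $\mu_{\mcO(1)}$-stability. For the remaining cases I first record the numerical data. From the defining sequence $0\lr\mcF\lr V\otimes\mcO_X\lr i_*(A\otimes\mcI)\lr 0$, and the fact that $i_*(A\otimes\mcI)$ is supported on $D\in|\mcO(d)|$ with generic rank one, one gets $c_1(\mcF)=-d\,[H]$ with $H=\mcO(1)$, hence $\mu_{\mcO(1)}(\mcF)=-d/r$. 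Because $\trm{Pic}\,\mbP^N=\mbb{Z}\cdot[H]$ and $H^N=1$, every saturated subsheaf $\mathcal{G}\subset\mcF$ of rank $s$ has $c_1(\mathcal{G})=-a[H]$ for an integer $a$, with $\mu_{\mcO(1)}(\mathcal{G})=-a/s$; and since $\mcF\subset V\otimes\mcO_X$, a $\mu_H$-semistable bundle of slope $0$, we have $a\geq 0$. Thus semistability in (b)--(d) is equivalent to the single inequality $a\,r\geq d\,s$ for all such $\mathcal{G}$, and as saturated subsheaves of a reflexive sheaf are reflexive, I may assume $\mathcal{G}$ reflexive throughout.

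Cases (b) and (c) have $r=2$, so only $s=1$ occurs and a saturated rank-one subsheaf is a line bundle $\mcO_X(-a)$. Hence it suffices to show $a\geq\lceil d/2\rceil$, i.e. that $\trm{Hom}(\mcO_X(-j),\mcF)=H^0(\mcF(j))=0$ for $j<d/2$. Twisting the defining sequence by $\mcO(j)$ identifies $H^0(\mcF(j))$ with the kernel of the multiplication map $V\otimes H^0(\mcO(j))\lr H^0\big(D,(A\otimes\mcO_D(j))\otimes\mcI\big)$, $v\otimes g\mapsto (g|_D)\,v$, whose elements are degree-$j$ syzygies $\sum g_i v_i=0$ among the chosen sections $v_1,v_2$ of $A$. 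For general $V$ and general $D$ these form a regular sequence, so every syzygy lies in the Koszul module generated by $(v_2,-v_1)$. In case (b) ($d=2$, arbitrary $A$) I only need to exclude $j=0$, and the linearly independent $v_1,v_2$ admit no scalar relation, so $H^0(\mcF)=0$ and $a\geq 1=d/2$; in case (c) ($d=2l$, $A=\mcO_D(l)$) the Koszul relation has coefficients of degree $l$, so $H^0(\mcF(j))=0$ for all $j<l$, giving $a\geq l=d/2$. This proves the semistability in both cases.

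Case (d) is the main case and the principal obstacle, since $2\leq r\leq\binom{N-1+m}{m}$ now forces me to bound $c_1$ of subsheaves of every rank $1\leq s\leq r-1$, not merely line subsheaves. My plan is to pass to the intrinsic geometry of $D$: restricting the defining sequence to $D$, and accounting for the $\mathcal{T}or_1$ term $A(-d)\otimes\mcI$ coming from the conormal bundle $N^\vee_{D/X}=\mcO_D(-d)$, exhibits the kernel bundle $M=\ker\!\big(V\otimes\mcO_D\to A\otimes\mcI\big)$ on $D$ as a subquotient of $\mcF|_D$. This $M$ is exactly the Lazarsfeld--Mukai-type kernel on $D$ attached to $(A,V)$, and its $\mu$-semistability is governed by the kernel-bundle Proposition. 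With $A=\mcO_D(md)$ the space $V$ is spanned by the restrictions of $r$ general forms of degree $md$, and the bound $r\leq\binom{N-1+m}{m}=\dim H^0(\mbP^{N-1},\mcO(m))$ is precisely the range in which these sections are general enough -- imposing independent conditions and, together with the equation of $D$, having their syzygies controlled in low degree -- for $M$ to be $\mu$-semistable.

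The final step is to transfer this back to $\mcF$. I would argue that a saturated rank-$s$ subsheaf $\mathcal{G}\subset\mcF$ with $a\,r<d\,s$ would, after passing to determinants, produce a nonzero map $\mcO_X(-a)\to\bigwedge^s\mcF\hookrightarrow\bigwedge^s V\otimes\mcO_X$ whose image lies in the kernel of the contraction $\bigwedge^s V\otimes\mcO_X\to\bigwedge^{s-1}V\otimes i_*(A\otimes\mcI)$; restricting to $D$ and invoking the semistability of $M$ then forces $a\geq ds/r$, a contradiction. The hard part will be to make this transfer precise -- controlling the $\mathcal{T}or$ contributions of the restriction to $D$, ensuring that saturating $\mathcal{G}$ does not raise $c_1$ past the bound, and above all verifying that the combinatorial range $r\leq\binom{N-1+m}{m}$ is exactly what guarantees semistability of the kernel bundle $M$. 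This last point, essentially a general-position and syzygy statement for general forms, is where I expect the real difficulty to lie.
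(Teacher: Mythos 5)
Your treatment of (a)--(c) is sound. Part (a) is exactly the paper's argument (Theorem \ref{stabonprojintro}). For (b) and (c) you take a different but valid route: the paper proves (b) via the normalization criterion for rank-2 reflexive sheaves on $\mbP^N$ (Lemma \ref{okolemma}: for $c_1$ even, semistability is equivalent to $H^0(\mcF_{\trm{norm}}(-1))=0$, which here is $H^0(\mcF)=0$), and proves (c) by restricting to $D$ and exhibiting $\mcF|_D$ as an extension $0\lr K\lr \mcF|_D\lr A^{\vee}\lr 0$ of rank-one stable sheaves of equal slope (Proposition \ref{sstabonD}). Your unified reduction of both cases to the vanishing $H^0(\mcF(j))=0$ for $j<d/2$, proved by the Koszul-syzygy argument for the regular sequence $v_1,v_2$ in the homogeneous coordinate ring of $D$, is correct and essentially self-contained; like the paper's arguments it needs only $\trm{codim}_D Z(V)=2$, not full genericity.

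Part (d), which you yourself call the principal obstacle, is where the genuine gap lies: neither of the two steps of your plan is carried out, and the second one runs into a concrete obstruction. First, you give no argument that the kernel sheaf $M=\ker(V\otimes\mcO_D\lr A\otimes\mcI)$ is $\mu$-semistable for general $V$; the paper's kernel-bundle result (Proposition \ref{kernel}) applies only when $\dim V=\dim D+1=N$ and $\mbP V$ is base-point free, so it cannot govern the whole range $2\leq r\leq{N-1+m\choose m}$, and the binomial bound is not a syzygy-genericity threshold on $D$ at all --- in the paper it arises simply as $h^0(\mbP^{N-1},\mcO_{\mbP^{N-1}}(m))$. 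Second, restricting the defining sequence to $D$ gives $0\lr A\otimes\mcI\otimes\mcO_D(-d)\lr\mcF|_D\lr M\lr 0$, and with $A=\mcO_D(md)$, $m\geq 1$, the Tor subsheaf on the left has slope $(m-1)d^2\geq 0$, whereas $\mu(\mcF|_D)=-d^2/r<0$. Thus $\mcF|_D$ is \emph{never} semistable in case (d) --- in contrast with case (c), where the Tor term has slope exactly $\mu(\mcF|_D)$, which is precisely why the restriction-to-$D$ argument works there and only there. Consequently no criterion of the form ``semistable on the ample divisor $D$ implies semistable on $X$'' is available, and any transfer from semistability of $M$ to semistability of $\mcF$ must defeat this destabilizing subsheaf of $\mcF|_D$; your sketched determinant/contraction argument provides no mechanism for that.

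The paper's proof of (d) avoids intrinsic geometry on $D$ entirely: for general $D=Z(s_0)$ one completes $s_0$ to sections $s_0,\dots,s_N$ of $\mcO(d)$ generating it, obtaining a finite flat morphism $\phi:\mbP^N\lr\mbP^N$ with $\phi(D)=H$ a hyperplane (Lemma \ref{flatHtoD}); for $V=\phi|_D^*V'$ with $V'\subset H^0(H,\mcO_H(m))$ one has $\mcF\simeq\phi^*\mathcal{F}_{H,\mcO_H(m),V'}$ (Lemma \ref{pbfromhpisstable}); the sheaf $\mathcal{F}_{H,\mcO_H(m),V'}$ is $\mu_{\mcO(1)}$-stable by case (a), its pullback under the finite map is semistable by Maruyama's lemma, and openness of semistability in the flat family over $\mcU_r$ (Remark \ref{Remarkonflatfamily}) passes the conclusion to general $V$. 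That is also exactly where the range $2\leq r\leq{N-1+m\choose m}=h^0(H,\mcO_H(m))$ comes from. To complete your write-up you would either need to import this covering trick, or genuinely prove semistability of $M$ together with a transfer principle circumventing the instability of $\mcF|_D$ --- both substantially harder than what remains in your text.
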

See $\mathcal{x}\,$\ref{caseprojspace} for a proof. Part (a) of the
above theorem follows from Theorem \ref{stabonprojintro}. Part (b) is
proved by applying a lemma from \cite{OK}. In case of part (c), we
prove that $\mcF|_D$ is $\mu_{\mcO(1)|_D}$-semistable, which implies
our result. We prove part (d) of the theorem by proving it in general
for any smooth, irreducible projective variety $X$, cf. Theorem
\ref{calabiyauintro}. We remark that, in case (d) of the above
theorem, if the condition $A=\mcO(md)|_D$ is weakened, the assertion
is not necessarily true. Lemma \ref{notsemist} in
$\mathcal{x}\,$\ref{caseprojspace} gives a class of such
examples. Note that parts (a) and (b) of the theorem hold for all
reduced and irreducible $D$ in the linear system and all
$r$-dimensional subspaces $V$.

We prove the following theorem on the $\mu$-(semi)stability of the
reflexive sheaves $\mcF$ on arbitrary smooth projective varieties.
\begin{calabiyauintro}\label{calabiyauintro}
  Suppose $X$ is an irreducible, smooth projective variety of
  dimension $N\geq 2$ over $\mbb{C}$. Consider an ample, globally
  generated line bundle $L$ on $X$ and an irreducible, smooth
  $D\in |\mcL|$. For $m>0$, let $V\subset H^0(D,\mcL|_D^{\otimes m})$
  be an $r$-dimensional subspace, where
  $2\leq r\leq {N-1+m\choose m}$. Then, for a general pair $(D,V)$,
  the reflexive sheaf $\mathcal{F}_{D,\mcL|_D^{\otimes m},V}$ is
  $\mu_{\mcL}$-semistable.
\end{calabiyauintro}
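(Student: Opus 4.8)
The plan is to test $\mu_{L}$-semistability against saturated subsheaves, using crucially that $\mathcal{F}=\mathcal{F}_{D,A,V}$ (with $A:=L|_D^{\otimes m}$) is by construction a subsheaf of the trivial bundle $V\otimes\mathcal{O}_X$. First I record the slope: as $i_*(A\otimes\mathcal{I}_{Z(V)})$ is a rank-one torsion sheaf supported on $D\in|L|$, its first Chern class is $[D]=c_1(L)$, so $c_1(\mathcal{F})=-c_1(L)$ and $\mu_L(\mathcal{F})=-L^N/r$. It thus suffices to show $c_1(\mathcal{G})\cdot L^{N-1}\le -sL^N/r$ for every saturated subsheaf $\mathcal{G}\subset\mathcal{F}$ of rank $s$ with $1\le s\le r-1$. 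By Grothendieck's boundedness lemma such (potentially destabilizing) subsheaves form a bounded family, hence finitely many numerical types; so it is enough to rule out each type for a general pair $(D,V)$, after which the flat family of the $\mathcal{F}_{D,A,V}$ (cf.\ Proposition~\ref{familyconstruct}) together with openness of semistability yields the assertion for the general pair.

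Fix such a $\mathcal{G}$ and let $\mathcal{G}'$ be its saturation inside $V\otimes\mathcal{O}_X$, a rank-$s$ subsheaf with $\mathcal{G}\subseteq\mathcal{G}'\cap\mathcal{F}$. Off its singularities $\mathcal{G}'$ is the pullback of the tautological subbundle under the classifying rational map $g$ to the Grassmannian $\mathrm{Gr}(s,V)$, so $-c_1(\mathcal{G}')=g^*\mathcal{O}(1)$ is nef; set $\delta:=g^*\mathcal{O}(1)\cdot L^{N-1}\ge 0$, so $c_1(\mathcal{G}')\cdot L^{N-1}=-\delta$. The extra constraint is that $\mathcal{G}'\cap\mathcal{F}=\ker\big(\mathcal{G}'\xrightarrow{\mathrm{ev}} i_*(A\otimes\mathcal{I}_{Z(V)})\big)$. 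Let $\rho\in\{0,1\}$ be the generic rank along $D$ of the image $\mathcal{J}$ of this map (it is at most $1$, being a subsheaf of a rank-one sheaf on $D$). Then $c_1(\mathcal{G}'\cap\mathcal{F})=c_1(\mathcal{G}')-\rho[D]$, giving $c_1(\mathcal{G})\cdot L^{N-1}\le -\delta-\rho L^N$. If $\rho=1$ this is $\le -L^N<-sL^N/r$ because $s<r$, and $\mathcal{G}$ does not destabilize.

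The remaining case $\rho=0$ is the heart of the matter. Here $\mathrm{ev}$ annihilates $\mathcal{G}'$ at the generic point of $D$, i.e.\ the local generators of $\mathcal{G}'$ restrict on $D$ to syzygies of the sections spanning $V$; equivalently, up to a torsion correction on $D$, $\mathcal{G}'|_D$ lands in the Lazarsfeld--Mukai kernel $\mathcal{M}:=\ker(V\otimes\mathcal{O}_D\to A\otimes\mathcal{I}_{Z(V)})$ on $D$, a sheaf of rank $r-1$ with $\mu_{L|_D}(\mathcal{M})=-mL^N/(r-1)$. For a general pair $(D,V)$ the base locus $Z(V)$ and the syzygies carry no divisorial component on $D$, so this torsion correction vanishes and $c_1(\mathcal{G}'|_D)\cdot(L|_D)^{N-2}=-\delta$; granting the $\mu_{L|_D}$-semistability of $\mathcal{M}$ then forces $\delta/s\ge mL^N/(r-1)\ge L^N/r$ (using $m\ge 1$), i.e.\ $c_1(\mathcal{G})\cdot L^{N-1}\le-\delta\le -sL^N/r$, exactly the bound needed. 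Thus the theorem reduces to the $\mu_{L|_D}$-semistability of the syzygy sheaf $\mathcal{M}$ on $D$ for a general pair, a statement of the same flavour but in dimension $N-1$.

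I expect this last reduction to be the principal obstacle, and I would attack it in two layers. Conceptually it is an induction on $N$, with the clean kernel $\mathcal{M}$ as the inductive object and the constraint $r\le\binom{N-1+m}{m}$ propagating down the dimension. The mechanism that must be supplied at each stage --- and the genuinely delicate point --- is a lower bound on the degree of syzygies of $r$ general sections of $L|_D^{\otimes m}$: the identity $\binom{N-1+m}{m}=\dim H^0(\mathbb{P}^{N-1},\mathcal{O}(m))$ says precisely that, for $r$ within this range, the degree-$m$ symbols (leading $m$-jets) of $r$ general sections at a general point are linearly independent, which prevents syzygies of small degree and so guarantees $\delta\ge sL^N/r$. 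Making this quantitative, and simultaneously excluding the divisorial syzygy components that would otherwise shrink $\delta$ (the failure of which, as Lemma~\ref{notsemist} shows, can genuinely break semistability once $A\neq L|_D^{\otimes m}$), is where the real work lies; boundedness of destabilizers and openness of semistability in the family then convert this into the stated result for a general $(D,V)$.
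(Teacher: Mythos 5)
There is a genuine gap, and it sits exactly where you locate it yourself. The easy half of your reduction (the slope computations, the case $\rho=1$, and the identification $\mathcal{G}=\mathcal{G}'\cap\mathcal{F}$) is fine, but the entire content of the theorem is funneled into the case $\rho=0$, and there the proof stops: the $\mu_{L|_D}$-semistability of the syzygy sheaf $\mathcal{M}=\ker\bigl(V\otimes\mathcal{O}_D\to A\otimes\mathcal{I}_{Z(V)}\bigr)$ for a general pair is \emph{granted}, not proven. The jets/leading-symbols heuristic is not a proof (linear independence of $m$-jets at a general point is a pointwise statement and does not by itself bound the degree $\delta$ of a rank-$s$ subsheaf), and the proposed induction on dimension does not close: $\mathcal{M}$ is not an object of the same kind as $\mathcal{F}_{D,A,V}$ --- it is the kernel of evaluation of a linear system on $D$ with cokernel of \emph{full} support, not an elementary transformation along a divisor inside $D$ --- so what you need one dimension down is the semistability of syzygy sheaves of $r$ general degree-$m$ forms, a notoriously delicate problem (Flenner's theorem, cited in the paper, covers only the complete linear system; for proper general subspaces this is a separate body of work). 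There is also a quantifier problem: the divisorial torsion of $\mathcal{G}'|_D$, which enters your degree estimate with the wrong sign, depends on $\mathcal{G}'$, which depends on $(D,V)$; finitely many numerical types from boundedness do not control this torsion, so ``choose $(D,V)$ general'' cannot be invoked against all candidate $\mathcal{G}'$ simultaneously as written. (Minor: the flat family you want is Remark \ref{Remarkonflatfamily}; the paper contains no proposition with the label you cite.)

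For contrast, the paper's proof bypasses the syzygy problem entirely by a finite-cover argument. For general $D$, Lemma \ref{flatHtoD} produces a finite flat morphism $\phi:X\to\mathbb{P}^N$ carrying $D$ to a hyperplane $H$; Theorem \ref{stabonprojintro} gives $\mu_{\mathcal{O}(1)}$-stability of $\mathcal{F}_{H,\mathcal{O}_H(m),V'}$ on $\mathbb{P}^N$ by an elementary argument using $\operatorname{Pic}\mathbb{P}^N=\mathbb{Z}$; Lemma \ref{pbfromhpisstable} identifies $\phi^*\mathcal{F}_{H,\mathcal{O}_H(m),V'}\simeq\mathcal{F}_{D,A,V}$ for $V=\phi|_D^*V'$ and invokes Maruyama's lemma that pullbacks of semistable sheaves under finite morphisms stay semistable; openness of semistability in the flat family of Remark \ref{Remarkonflatfamily} then passes from these special pulled-back $V$ to a general $V$. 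The bound $r\le\binom{N-1+m}{m}=h^0(H,\mathcal{O}_H(m))$ is precisely the constraint on which ranks arise from pulled-back subspaces. If you wished to complete your argument, you would still need a statement of this strength for $\mathcal{M}$, and the only evident route to it is the same pullback mechanism --- at which point the direct subsheaf analysis becomes superfluous.
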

See $\mathcal{x}\,$\ref{irred}. The method of proof employed is the
following. Consider an appropriate finite morphism
$X\rightarrow\mbP^N$, where $N=\trm{dim}\,X$. We prove the
corresponding (semi)stability statement for the projective space. We
know by \cite[Lemma 1.17]{Mar}, that the pullback of a semistable
torsion-free sheaf under a finite morphism is semistable; and that
semistability is an open condition in flat families \cite[Proposition
2.3.1]{HL}. We thus get the required result.

The same technique is applied to study the (semi)stability of some
kernel bundles. A kernel bundle $M_{\mcL,W}$ is defined as
follows. Consider an ample and globally generated line bundle $\mcL$
on a smooth, irreducible projective variety $X$ of dimension $n$. Let
$W\subset H^0(X,\mcL)$ be a subspace such that the linear system
$\mathbb{P}W$ is base-point free. Hence we have the following short
exact sequence where $M_{\mcL,W}$ is the kernel vector bundle:
$$0\lr M_{\mcL,W}\lr W\otimes\mcO_X\lr \mcL\lr 0\,.$$
Let $W\subset H^0(X,L)$ be a general $(n+1)$-dimensional subspace. We
prove that the kernel bundle $M_{\mcL,W}$ associated to $(\mcL,W)$ is
$\mu_{\mcL}$-polystable, cf. Proposition \ref{kernel}.  We mention
that for curves, certain surfaces and projective spaces, the
$\mu_{\mcL}$-(semi)stability of $M_{\mcL,W}$ has been obtained in
\cite{AB,BUT,CC,CC1,LE,LEM,HF,EM,RP}, for $W=H^0(X,\mcL)$ with certain
assumptions on $L$.
\subsection*{Acknowledgements}
I thank Dr. Jaya NN Iyer for her guidance during the course of this
project. I also thank Dr. T. E. Venkata Balaji and Prof. D. S. Nagaraj
for helpful discussions and their support. I also thank the referee
and the editor for useful comments which helped make the exposition
better.
\section{Preliminaries}\label{Preliminaries}
\subsection{Definitions and Notations} Let $X$ be a smooth projective
variety of dimension $N\geq 2$ over $\mathbb{C}$.
\begin{enumerate}
\item Let $\mcL$ be an ample and globally generated line bundle on
  $X$. By Bertini's theorem, the set $\trm{sm}|L|$ as given below is a
  dense open set of $|L|$,
  \begin{equation*}\label{sm|L|}
  \trm{sm}|\mcL|=\{D\in |\mcL|:D\trm{ is smooth and irreducible}\}\,.
\end{equation*}
\item Let $W$ be a vector space over $\mathbb{C}$. Then $G(m,W)$
  ($1\leq m\leq \trm{dim}\,W$) denotes the Grassmannian of
  $m$-dimensional subspaces of $W$.
\end{enumerate}
\subsection{Mumford-Takemoto (semi)stability} Let $L$ be an ample line
bundle on $X$ (as above). Consider a torsion-free coherent sheaf $F$
of rank $r$ on $X$.
\begin{enumerate}
\item The slope of $F$ with respect to $L$ is defined as:
  \[\mu_L(F)=\frac{c_1(F)\cdot (L^{N-1})}{r}\,.\]
\item The sheaf $F$ is said to be $\mu_L$-semistable
  (resp. $\mu_L$-stable), if for any coherent subsheaf $E\subset F$ of
  rank $s$ where $0<s<r$, one has $\mu_L(E)\leq\mu_L(F)$
  (resp. $\mu_L(E)<\mu_L(F)$).
\item The torsion-free coherent sheaf $F$ is $\mu_L$-polystable if it
  is a direct sum of $\mu_L$-stable sheaves of the same slope.
\end{enumerate}
 \section{Construction of Reflexive sheaves}\label{construct}
 Consider a smooth projective variety $X$ of dimension $N\geq 2$ over
 $\mathbb{C}$, and an ample, globally generated line bundle $L$ on
 $X$. For a divisor $D\in\trm{sm}|\mcL|$, let $i:D\hra X$ denote the
 inclusion. Let $A$ be an ample, globally generated line bundle on
 $D$.
 
 By the Noether-Lefschetz Theorem, if $N\geq 4$, then
 $\text{Pic}\,X\lr\text{Pic}\,D$ is an isomorphism, cf. \cite[Example
 3.1.25]{Laz}. Hence, the line bundle $A$ is the restriction of a line
 bundle from $X$.

 Consider $G(r,H^0(D,A))$, the Grassmannian of $r$-dimensional
 subspaces of the space of global sections $H^0(D,A)$, where
 $2\leq r\leq h^0(D,A)$. For $V\in G(r,H^0(D,A))$, let $Z(V)$ denote
 the closed subscheme of $D$ defined by the vanishing of sections of
 $V$. Recall that $Z(V)$ has codimension at most $r$ in $D$. In fact,
 $Z(V)$ has codimension exactly $r$ in $D$ for a general
 $V\in G(r,H^0(D,A))$.

 The base locus of the linear system $\mbP V$ corresponding to $(A,V)$
 on $D$ is $Z(V)$. The ideal sheaf $\mcI$ of $Z(V)$ is the image of
 the morphism $V\otimes A^{\vee} \lr \mcO_D$ on $D$. This gives the
 surjective evaluation map
 $V\otimes\mcO_D\twoheadrightarrow A\otimes \mcI$ on $D$. Push-forward
 this morphism by the closed immersion $i$ to $X$, and consider the
 following composition:
\begin{equation*}\label{pushfwdseq}
  V\otimes\mcO_X\twoheadrightarrow V\otimes i_*\mcO_D\lr i_*(A\otimes \mcI)\lr 0\,.
\end{equation*}
Let $\mcF$ denote the kernel of the composition. Thus, we
get:
\begin{equation}\label{definingses}
 0\lr \mcF\lr V\otimes\mcO_X\lr i_*(A\otimes\mcI)\lr 0\,.
\end{equation}
\begin{prelim}\label{prelim}
 The kernel sheaf $\mcF$ associated to $(D,A,V)$ has the following initial properties:
 \begin{enumerate}
  \item[(a)] The sheaf $\mcF$ is reflexive of rank $r$.
  \item[(b)] For a general $V\in G(r,H^0(D,A))$, the sheaf $\mcF$ is locally free when $r\geq N$. 
  \item[(c)] The determinant of $\mcF$ is $\mcO_X(-D)\simeq\mcL^{\vee}$.
  \item[(d)] The sheaf $\mcF$ has no non-zero global sections, i.e. $H^0(X,\mcF)=0$.
 \end{enumerate}
\end{prelim}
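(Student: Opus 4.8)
The plan is to read off (a) (the numerical part), (c), and (d) almost directly from the defining sequence \eqref{definingses}, and to isolate the local-freeness statement (b) as the only part genuinely using $r\geq N$. Since $\mcF$ sits inside the locally free sheaf $V\otimes\mcO_X$ it is torsion-free, and since $i_*(A\otimes\mcI)$ is supported on the divisor $D$ it has generic rank $0$ on $X$; hence $\mcF$ and $V\otimes\mcO_X$ agree at the generic point and $\trm{rank}\,\mcF=r$. For (d) I would take global sections in \eqref{definingses}, identifying $H^0(X,\mcF)$ with the kernel of $V\to H^0(D,A\otimes\mcI)$; the inclusion $\mcI\hra\mcO_D$ gives $H^0(D,A\otimes\mcI)\hra H^0(D,A)$, and the composite $V\to H^0(D,A\otimes\mcI)\hra H^0(D,A)$ is exactly the given inclusion $V\subset H^0(D,A)$, so being injective it forces the first map to be injective and $H^0(X,\mcF)=0$. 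For (c) I would use additivity of $c_1$: $c_1(V\otimes\mcO_X)=0$, and since $A\otimes\mcI$ has generic rank $1$ along the prime divisor $D$ (because $Z(V)$ is a proper subscheme of $D$, so $\mcI=\mcO_D$ at the generic point of $D$), one has $c_1(i_*(A\otimes\mcI))=[D]$; hence $c_1(\mcF)=-[D]$ and $\det\mcF=\mcO_X(-D)\simeq\mcL^{\vee}$.

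The substance of (a) is reflexivity, which I would obtain via Serre's condition $S_2$: on the smooth variety $X$ a torsion-free sheaf is reflexive iff $\trm{depth}\,\mcF_x\geq\min(2,\dim\mcO_{X,x})$ for all $x$. Localizing \eqref{definingses} at $x$ and applying the depth estimate $\trm{depth}\,\mcF_x\geq\min\bigl(\trm{depth}\,(V\otimes\mcO_X)_x,\ \trm{depth}\,i_*(A\otimes\mcI)_x+1\bigr)$, the first term equals $\dim\mcO_{X,x}$ since $V\otimes\mcO_X$ is locally free. If $x\notin D$ the cokernel stalk vanishes and $\mcF_x$ is free; if $x\in D$, then $i_*(A\otimes\mcI)_x$ has the same depth over $\mcO_{X,x}$ as $(A\otimes\mcI)_x$ over $\mcO_{D,x}$, and since $A\otimes\mcI$ is torsion-free on the integral scheme $D$ its depth is $\geq 1$ at every non-generic point of $D$, i.e. at every $x$ with $\dim\mcO_{X,x}\geq 2$. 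In each case $\trm{depth}\,\mcF_x\geq\min(2,\dim\mcO_{X,x})$, so $\mcF$ is reflexive.

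The main obstacle is (b), where $r\geq N$ enters twice. First I would show that the locus where $\mcF$ fails to be locally free is contained in $Z(V)$: this is clear off $D$, and near a point $x\in D\setminus Z(V)$ I would trivialize $A$ and pick a section in $V$ not vanishing at $x$, reducing the surjection $V\otimes\mcO_X\to i_*(A\otimes\mcI)$ locally to the quotient $\mcO_X^{\,r}\to\mcO_X/(t)$, where $t$ is a local equation of $D$; after an $\mcO_X$-linear change of basis the kernel becomes $(t)\oplus\mcO_X^{\,r-1}\cong\mcO_X^{\,r}$, so $\mcF$ is free along $D\setminus Z(V)$. Second, for general $V\in G(r,H^0(D,A))$ the base locus $Z(V)$ has codimension exactly $r$ in $D$; since $\dim D=N-1<r$, this forces $Z(V)=\emptyset$ (concretely, $A$ being globally generated, an incidence-variety count shows that $\dim D+1=N$ general sections already have empty common zero locus). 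Combining the two points, the non-free locus of $\mcF$ is empty. The step needing care is the local normal form near $D\setminus Z(V)$: one must verify that the change of basis is by an automorphism of $\mcO_X^{\,r}$ and that $(t)\cong\mcO_X$ as the ideal of the smooth divisor $D$.
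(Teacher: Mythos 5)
Your proposal is correct, and where the paper leans on citations you give self-contained arguments, so the two routes genuinely differ even though the skeleton (read everything off the defining sequence \eqref{definingses}; use general position for (b)) is the same. For (a) the paper simply invokes \cite[Lemma 2.4]{Ab}: an elementary transformation of a locally free sheaf along a torsion-free sheaf supported on a divisor is reflexive. Your depth argument --- torsion-free plus $S_2$ implies reflexive on a smooth variety, with $S_2$ extracted from the defining sequence via $\trm{depth}\,\mcF_x\geq\min\bigl(\trm{depth}\,(V\otimes\mcO_X)_x,\,\trm{depth}\,i_*(A\otimes\mcI)_x+1\bigr)$ and torsion-freeness of $A\otimes\mcI$ on the integral scheme $D$ --- is in effect a proof of that cited lemma. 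Likewise for (b): the paper argues that once $Z(V)=\emptyset$ (general position, $r\geq N$) the quotient $A\otimes\mcI\simeq A$ is a line bundle on $D$, so $\mcF$ is a classical elementary transformation and hence locally free; your change-of-basis normal form, with kernel $(t)\oplus\mcO_X^{r-1}\simeq\mcO_X^r$, proves exactly that classical fact, and moreover shows $\mcF$ is always locally free on $X\setminus Z(V)$ --- a statement the paper needs later anyway and re-derives there via \cite[Lemma 1.1]{Ab}. Parts (c) and (d) coincide in substance: your $c_1$-additivity is the paper's determinant manipulation (both hinge on $A\otimes\mcI$ having generic rank one along $D$ and on $Z(V)$ having codimension $\geq 2$ in $X$), and your global-sections computation spells out the long exact cohomology sequence that the paper merely alludes to for (d). What your route buys: self-containedness, the stronger local-freeness statement off $Z(V)$, and an argument for (a) that visibly still works when $D$ is only reduced and irreducible, consistent with the paper's remark that the construction extends to that case. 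What the paper's route buys is brevity.
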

 \begin{proof} 
   For part (a), we note that in the exact sequence
   \eqref{definingses}, $\mcF$ is the elementary transformation of a
   locally free sheaf by a torsion-free sheaf supported on the smooth
   divisor $D$. By \cite[Lemma 2.4]{Ab}, $\mcF$ is a reflexive sheaf
   of rank $r$. When $r\geq N$, a general $V\in G(r,H^0(D,A))$ has
   $\trm{codim}_D Z(V)=r\geq N$. Hence, $Z(V)$ is empty as $D$ is of
   dimension $N-1$. In this case, $A\otimes\mcI\,\simeq\, A$ and
   $\mcF$ is the usual elementary transformation, and is locally free.
   From \eqref{definingses},
   $\trm{det}\,\mcF\simeq\trm{det}\, i_*(A\otimes\mcI)^{\vee}$. Since
   $Z(V)$ is of codimension at least 2 in $X$, we have
   $\trm{det}\,i_*(A\otimes\mcI)\simeq \trm{det}\,i_*A\simeq
   \mcO_X(D).$ Part (d) can be proved by consider the long exact
   sequence of cohomology associated to \eqref{definingses}.
\end{proof}
This construction gives us reflexive sheaves of rank $r\geq 2$ on
smooth projective varieties of dimension $N\geq 2$. We call the dual
sheaves $\mcE_{D,A,V}=\mcF^{\vee}$, the \emph{Lazarsfeld-Mukai
  reflexive sheaves}. Dualizing the exact sequence \eqref{definingses}
defining $\mcF$, we get:
\begin{equation}\label{dualses}
  0\lr V^{\vee}\otimes\mcO_X\lr \mcE_{D,A,V}\lr\mathcal{E}xt^1(i_*(A\otimes\mcI),\mcO_X)\lr 0\,.
 \end{equation}
 As with Lazarsfeld-Mukai bundles, the Lazarsfeld-Mukai reflexive
 sheaves are naturally equipped with an $r$-dimensional space of
 global sections $V^{\vee}\subset H^0(X,\mcE_{D,A,V})$.
 \theoremstyle{definition}
 \newtheorem{Remarkonflatfamily}[Lefschetz]{Remark}
\begin{Remarkonflatfamily}\label{Remarkonflatfamily}
  Suppose $X$ is a smooth, irreducible projective variety of dimension
  $N\geq 2$ over $\mbb{C}$. Let $D$ be a smooth and irreducible
  divisor on $X$ and $A$ be an ample and globally generated line
  bundle on $D$. Let $r$ be such that $2\leq r \leq h^0(D,A)$.
  Consider the dense open subscheme $\mcU_r\subset G(r,H^0(D,A))$
  given by:
\begin{equation*}\label{correctcodim}
 \mcU_r=\{V\in G(r,H^0(D,A))\,|\,Z(V)\trm{ has codimension } r\trm{ in } D\}\,.
\end{equation*}
If $V\in\mcU_r$, it is well-known that the ideal sheaves $\mcI$ form a
flat family of sheaves parametrized by $\mcU_r$. Consequently, the
torsion-free sheaves $\{A\otimes \mcI\}_{V\in \mcU_r}$ and the dual
Lazarsfeld-Mukai reflexive sheaves $\{ \mcF \}_{V\in \mcU_r}$ form
flat families of sheaves parametrized by $\mcU_r$.
\end{Remarkonflatfamily}

\section{(Semi)stability of the sheaves $\mcF$}\label{studystab}
In this section we study the (semi)stability properties of the
reflexive sheaves $\mcF$.
\subsection{Varieties with cyclic Picard group}\label{picardgroupZ}
Suppose that $X$ is a smooth projective variety of dimension $N\geq 2$
such that $\trm{Pic}\,X\simeq \mathbb{Z}\cdot [H]$. Here $[H]$ is the
class of the ample generator the Picard group. Projective spaces,
smooth hypersurfaces in $\mbP^n$ for $n\geq 4$, general complete
intersections in higher dimensional projective spaces are some
examples of such varieties. We now prove Theorem
\ref{stabonprojintro}. When $X$ is a K3-surface with cyclic Picard
group and $\mcF$ is a vector bundle, the $\mu_H$-stability and the
simplicity of $\mcF$ is known, cf. \cite[Lemma 1.3]{RL}. We generalize
this to higher dimensional varieties when $\mcF$ is a reflexive sheaf.

\begin{proof}[Proof of Theorem \ref{stabonprojintro}.] 
  Denote $\mcE:=\mcE_{D,A,V}=\mcF^{\vee}$. Recall the exact sequence
  \eqref{dualses} defining the Lazarsfeld-Mukai reflexive sheaf. Since
  the cokernel sheaf $\mathcal{E}xt^1(i_*(A\otimes\mcI),\mcO_X)$ is
  supported only on $D$, there is a generically surjective morphism
  $\mcO_X^r\lr \mcE$.
  
  Assume the contrary, i.e. $\mcE$ (equivalently $\mcF$) is not
  $\mu_{H}$-stable. Then, there is a torsion-free quotient $Q$ of
  $\mcE$, i.e. $\mcE\twoheadrightarrow \mcQ$ of rank $s< r$, such that
  $\mu_{H}(\mcE)\geq\mu_{H}(\mcQ)$. From Proposition \ref{prelim},
  $\trm{det}\,(\mcE)\simeq\mcO_X(H)$,
  thus \[\frac{c_1(\mcQ)\cdot(H^{N-1})}{s}\leq\frac{H^N}{r}.\] Since
  $\trm{Pic}\,X\simeq\mathbb{Z}$, we get $c_1(\mcQ)\leq 0$.

  Let $\tildeQ=\mcQ^{\vee\vee}$, the double dual of the torsion-free
  sheaf $\mcQ$. The sheaf $\tildeQ$ on $X$ is reflexive with
  $c_1(\tildeQ)=c_1(\mcQ)$, and there is a natural inclusion
  $\mcQ\hra \tildeQ$ which is generically an isomorphism. We have
  morphisms $\mcO_X^r\lr \mcE\lr\mcQ\hra \tildeQ.$ As each morphism is
  generically surjective, we have a generically surjective morphism
  $\mcO_X^r\lr \tildeQ\,.$ The $s$-th wedge product of the above
  morphism gives a generically surjective morphism:
  \[\bigwedge^s\mcO_X^r\lr \bigwedge^s
    \tildeQ\simeq\trm{det}\,\tildeQ\,.\] By \cite[Proposition
  1.2.7]{HL}, we have
  $0=\mu_{H}(\bigwedge^s\mcO_X^r)\leq\mu_{H}(\trm{det}\,\tildeQ)=c_1(\trm{det}\,\tildeQ)\cdot
  (H^{N-1})\,,$ implying that $c_1(\tildeQ)\geq 0$. Hence,
  $c_1(\tildeQ)=0$,
  i.e. $\trm{det}\,Q\simeq\trm{det}\,\tildeQ\simeq\mcO_X$.

  Let $\xi$ be the generic point of $X$. The generically surjective
  morphism $\mcO_X^r\lr \tildeQ$, gives the surjective morphism of
  $\mcO_{\xi}$-vector spaces
  $(\mcO_X^r)_{\xi}\twoheadrightarrow \tildeQ_{\xi}$.  Suppose the
  morphism $\mcO_X^r\lr \tildeQ$ is given by global sections
  $t_1,t_2,\cdots,t_r$ of $\tildeQ$, then the stalks of these at $\xi$
  generate $\tildeQ_{\xi}$ as an $\mcO_{\xi}$-vector space. Thereby,
  there are $s$ among these, say $t_1,t_2,\cdots, t_s$ which form a
  basis of $\tildeQ_{\xi}$. These sections give a generically
  surjective morphism:
  \[f:\mcO_X^s\lr \tildeQ\,.\] The map $f$ is in fact an
  isomorphism. Indeed, if $K$ and $C$ denote the kernel and cokernel
  of $f$, we get:
  \[0\lr K\lr \mcO_X^s\xra{f} \tildeQ\lr C\lr 0\,.\] Since $f$ is
  generically surjective, $\trm{rank}\,C=0$. Then, $\trm{rank}\,K=0$,
  and thus $K=0$. Also, $\trm{det}\,C\simeq\mcO_X$. Therefore, $C$ is
  supported on a closed set of codimension $\geq 2$ on $X$. Hence,
  $\mcO_X^s$ and $\tildeQ$ are isomorphic on an open set whose
  complement has codimension $\geq 2$. By \cite[Proposition 1.6
  (iii)]{RH}, $\tildeQ\simeq\mcO_X^s$.

By \cite[Corollary 1.2]{RH}, $\mcQ^{\vee}$ is reflexive. This gives $\mcO_X^s\simeq \tildeQ^{\vee}\simeq\mcQ^{\vee\vee\vee}\simeq \mcQ^{\vee}$. From the surjection $\mcE\twoheadrightarrow\mcQ$, we get  $\mcQ^{\vee}\simeq\mcO_X^s\hra \mcF$. This contradicts $H^0(X,\mcF)=0$ (Proposition \ref{prelim} (d)). Hence, the kernel reflexive sheaf is $\mu_H$-stable. 
\end{proof}
\theoremstyle{plain}
\newtheorem{tftrivialdet}[stabonproj]{Remark}
\begin{tftrivialdet}
  From the proof of theorem, the following commutative diagram gives
  $Q\simeq \mcO_X^s$.
  \begin{displaymath}
  \xymatrix{\mcO_X^s\ar[r]^{\simeq} \ar@{->}[d] & \tildeQ\simeq \mcO_X^s\\
	    Q\ar@{^{(}->}[ur] &}
  \end{displaymath}
  The following result can be inferred from the above. Consider a
  smooth projective variety $X$. Let $Q$ be a torsion-free sheaf on
  $X$ with trivial determinant over an open set whose complement has
  codimension $\geq 2$. Suppose $Q$ admits a generically surjective
  morphism $\mcO_X^r\lr Q$, then the torsion-free sheaf $Q$ is itself
  trivial. We thank the referee for pointing this out.
\end{tftrivialdet}
\begin{stabonproj}\label{elementofpf}
  The proof of Theorem \ref{stabonprojintro} proves essentially the
  following statement: Suppose a smooth projective variety $X$ has
  $\emph{Pic}\,X=\mathbb{Z}\cdot[H]$, where $H$ is ample. Let $F$ be a
  reflexive sheaf on $X$ such that
\begin{enumerate}
\item[a.] the determinant $\emph{det}\,F=\mcO_X(-H)$,
\item[b.] there is a generically surjective morphism from a trivial
  bundle on $X$ to $F^{\vee}$,
\item[c.] the space of global sections of $F$, i.e. $H^0(X,F)=0$.
\end{enumerate}
Then the reflexive sheaf $F$ is $\mu_H$-stable.
\end{stabonproj}
\subsection{(Semi)stability in case of arbitrary smooth projective varieties}\label{irred}
Suppose that $X$ is an irreducible, smooth projective variety of
dimension $N\geq 2$ over $\mbb{C}$. Let $\mcL$ be an ample and
globally generated line bundle on $X$.
\begin{flatHtoD}\label{flatHtoD}
  For a general $D\in\emph{sm}|L|$, there is a finite, flat morphism
  $\phi:X\lr\mathbb{P}^N$ such that $D$ maps to the hyperplane
  $H=Z(x_0)$ in $\mathbb{P}^N$, where
  $x_0,x_1,\cdots,x_N\in H^0(\mbb{P}^N,\mcO(1))$ are the homogeneous
  coordinates.
\end{flatHtoD}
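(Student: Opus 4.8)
The plan is to produce $\phi$ as the morphism attached to a well-chosen $(N+1)$-dimensional base-point-free subspace of $H^0(X,\mcL)$. Fix a smooth $D\in\trm{sm}|\mcL|$ and a section $s_0\in H^0(X,\mcL)$ with $\trm{div}(s_0)=D$; the sections of $\mcL$ vanishing identically on $D$ are exactly the scalar multiples of $s_0$. First I would record that $h^0(X,\mcL)\geq N+1$: since $\mcL$ is ample and globally generated, the complete linear system defines a morphism whose pullback of $\mcO(1)$ is the ample bundle $\mcL$, so no positive-dimensional subvariety is contracted and the image has dimension $N$, forcing $h^0(X,\mcL)-1\geq N$. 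The goal is then to choose an $(N+1)$-dimensional subspace $W\subseteq H^0(X,\mcL)$ with $s_0\in W$ such that the linear system $\mbP W$ is base-point-free; the associated morphism $\phi=\phi_W:X\lr\mbP^N$ will be the one we want, with the homogeneous coordinates chosen so that $\phi^{*}x_0=s_0$.

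The key step is base-point-freeness of a general such $W$. Because $s_0\in W$ and $s_0$ vanishes only along $D$, any base point of $\mbP W$ must lie on $D$, so it suffices to make the restricted sections base-point-free on $D$. Let $\rho:H^0(X,\mcL)\lr H^0(D,\mcL|_D)$ be the restriction map; its image $R$ is base-point-free on $D$ because, $\mcL$ being globally generated on $X$, through every $x\in D$ there is a global section of $\mcL$ nonvanishing at $x$. Choosing $W\ni s_0$ of dimension $N+1$ amounts to choosing an $N$-dimensional subspace $\rho(W)\subseteq R$, and I would argue that a general such subspace stays base-point-free on $D$: writing $\psi_R:D\lr\mbP(R^{\vee})$ for the morphism defined by $R$, the subspace $\rho(W)$ corresponds to a linear projection with centre $\Lambda$, and $\rho(W)$ is base-point-free on $D$ exactly when $\Lambda\cap\psi_R(D)=\varnothing$. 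Since $\dim\psi_R(D)\leq N-1$ and $\dim\Lambda=\dim R-1-N$, a dimension count gives $\dim\Lambda+\dim\psi_R(D)<\dim\mbP(R^{\vee})$, so a general $\Lambda$ misses $\psi_R(D)$. Hence a general $W\ni s_0$ is base-point-free, and this general-position step is the one genuine obstacle; everything else is formal.

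Given base-point-freeness, finiteness of $\phi=\phi_W$ is immediate from ampleness: $\phi^{*}\mcO(1)\simeq\mcL$, so if $\phi$ contracted a curve $C$ to a point then $\deg(\mcL|_C)=0$, contradicting the ampleness of $\mcL|_C$. Thus $\phi$ is quasi-finite and proper, hence finite, and since $\dim X=N$ its image is an $N$-dimensional closed subvariety of $\mbP^N$, i.e. all of $\mbP^N$; so $\phi$ is finite and surjective. Flatness then follows from miracle flatness: $X$ is smooth (hence Cohen-Macaulay), $\mbP^N$ is regular, and all fibres of the finite morphism $\phi$ are $0$-dimensional, so $\phi$ is flat. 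Finally, since $\phi^{*}x_0=s_0$, we have $\phi^{-1}(Z(x_0))=Z(s_0)=D$; in particular $\phi(D)\subseteq H=Z(x_0)$, which completes the construction.
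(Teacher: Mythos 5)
Your proposal is correct and takes essentially the same route as the paper: complete the defining section $s_0$ of $D$ to an $(N+1)$-dimensional base-point-free subsystem of $H^0(X,\mcL)$, obtain $\phi$ with $\phi^*x_0=s_0$ so that $\phi^{-1}(H)=D$, deduce finiteness from ampleness of $\mcL$, and get flatness by miracle flatness for a finite surjective morphism of smooth varieties. The only difference is one of detail: your projection/dimension-count argument makes the general-position step explicit and in fact proves the statement for \emph{every} $D\in\textrm{sm}\,|\mcL|$, whereas the paper invokes generality of $D$ so that $s_0$ can be completed to a general generating collection of $N+1$ sections.
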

\begin{proof}
  Since $L$ is globally generated and $\trm{dim}\,X=N$, any general
  collection of $N+1$ sections in $H^0(X,L)$ generate $L$. If
  $D\in\trm{sm}|L|$ is general, then $D=Z(s_0)$ for some
  $s_0\in H^0(X,L)$, and we can find $s_1,s_2,\cdots, s_N\in H^0(X,L)$
  such that $\{s_0,s_1,s_2,\cdots,s_N\}$ generate $L$. These sections
  give a morphism $\phi:X\lr \mathbb{P}^N$ such that
  $\phi^*(\mcO(1))=L$ and $s_i=\phi^*x_i$. If $H$ is the hyperplane
  $Z(x_0)\subset\mbb{P}^N$, we get the following commutative diagram.
\begin{displaymath}
\xymatrix{D\,\ar@{^{(}->}[r]_{i}\ar[d]_{\phi|_D} & X\ar[d]^{\phi} \\
	  H\,\ar@{^{(}->}[r]_j & \mathbb{P}^N}
\end{displaymath}
Since $X$ is a smooth projective variety over $\mbb{C}$ and $\phi$ is
defined by the sections of an ample line bundle $L$, the morphism
$\phi$ is finite \cite[Corollary 1.2.15]{Laz}. This also implies that
$\phi$ is surjective. Thereby, $\phi$ is a flat morphism
\cite[Exercise III.9.3 (a)]{RH1}.
\end{proof}
We remark that, over positive characteristic we will obtain a finite
map from $X$ to the projective space by choosing a very ample line
bundle $L$.

Any ample, globally generated line bundle on the hyperplane $H$ is of
the form $\mcO_H(m)\simeq\mcO_{\mbb{P}^N}(m)|_H$ for some
$m>0$. Consider $V'\in G(r,H^0(H,\mcO_H(m))$ where
$2\leq r\leq h^0(H,\mcO_H(m))$, such that $\trm{codim}_H
Z(V')=r$. Then we have:
\begin{equation}\label{sesonH}
0\lr\mathcal{F}_{H,\mcO_H(m),V'}\lr V'\otimes {\mcO}_{\mbP^N}\lr {j}_*(\mcO_H(m)\otimes\mathcal{I}_{Z(V')})\lr 0\,.
\end{equation}
By Theorem \ref{stabonprojintro}, $\mathcal{F}_{H,\mcO_H(m),V'}$ is
$\mu_{\mcO(1)}$-stable. We now have the following lemma.
\begin{pbfromhpisstable}\label{pbfromhpisstable}
 Let $A=(L|_D)^{\otimes m}$ on $D$. Then
 \begin{enumerate}
  \item[(a)] the subspace $V=\phi|_D^*V'\subset H^0(D,A)$ and $\emph{codim}_{D} Z(V)=r$, 
  \item[(b)] the sheaf $\mcF\simeq \phi^*\mathcal{F}_{H,\mcO_H(m),V'}$ and is $\mu_L$-semistable.
 \end{enumerate}
\end{pbfromhpisstable}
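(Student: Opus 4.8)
The plan is to deduce both parts from flat base change applied to the defining sequence \eqref{sesonH} on $\mbP^N$, combined with Maruyama's lemma on pullbacks of semistable sheaves. The key geometric observation is that the square of Lemma \ref{flatHtoD} is \emph{Cartesian}: since $D=Z(s_0)=Z(\phi^*x_0)=\phi^{-1}(Z(x_0))=\phi^{-1}(H)$ scheme-theoretically, we have $D\simeq X\times_{\mbP^N}H$. As $\phi$ is finite and flat, its base change $\phi|_D:D\lr H$ along $j$ is again finite, flat and surjective. I also record that $A=(\mcL|_D)^{\otimes m}=(\phi|_D^*\mcO_H(1))^{\otimes m}=\phi|_D^*\mcO_H(m)$.

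For part (a), since $\phi|_D$ is surjective onto the integral scheme $H$, a section of $\mcO_H(m)$ pulling back to $0$ must vanish on $\phi|_D(D)=H$, so the pullback $\phi|_D^*:H^0(H,\mcO_H(m))\lr H^0(D,A)$ is injective and $V=\phi|_D^*V'$ is genuinely $r$-dimensional. The ideal sheaf $\mathcal{I}_{Z(V')}$ is the image of $V'\otimes\mcO_H(-m)\lr\mcO_H$; pulling back by the flat morphism $\phi|_D$ preserves exactness, so $\phi|_D^*\mathcal{I}_{Z(V')}\lr\mcO_D$ is injective with image the ideal cut out by $V\otimes A^\vee\lr\mcO_D$, that is, $\mcI$. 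Hence $Z(V)=\phi|_D^{-1}(Z(V'))$ as schemes, and since $\phi|_D:Z(V)\lr Z(V')$ is finite and surjective it preserves dimension, giving $\trm{codim}_D Z(V)=\trm{codim}_H Z(V')=r$.

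For part (b), I apply $\phi^*$ to \eqref{sesonH}; flatness of $\phi$ keeps the sequence exact, yielding
\[0\lr\phi^*\mathcal{F}_{H,\mcO_H(m),V'}\lr V'\otimes\mcO_X\lr\phi^*j_*(\mcO_H(m)\otimes\mathcal{I}_{Z(V')})\lr 0\,.\]
By flat base change along the Cartesian square, $\phi^*j_*\mathcal{G}\simeq i_*(\phi|_D)^*\mathcal{G}$ for any coherent $\mathcal{G}$ on $H$; taking $\mathcal{G}=\mcO_H(m)\otimes\mathcal{I}_{Z(V')}$ and invoking part (a), this rightmost term becomes $i_*(A\otimes\mcI)$. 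Under the identification $V'\simeq V$ furnished by $\phi|_D^*$, the pulled-back evaluation map is precisely the evaluation map $V\otimes\mcO_X\lr i_*(A\otimes\mcI)$ defining \eqref{definingses}, so comparing kernels gives $\mcF\simeq\phi^*\mathcal{F}_{H,\mcO_H(m),V'}$. Since $\mathcal{F}_{H,\mcO_H(m),V'}$ is $\mu_{\mcO(1)}$-stable, hence semistable, its pullback under the finite morphism $\phi$ is $\mu_{\phi^*\mcO(1)}=\mu_L$-semistable by \cite[Lemma 1.17]{Mar}.

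The main obstacle is not the semistability conclusion, which is immediate once the isomorphism is established, but the careful scheme-theoretic bookkeeping: verifying that the square is genuinely Cartesian, that flat pullback identifies $\phi|_D^*\mathcal{I}_{Z(V')}$ with $\mcI$ as subsheaves of $\mcO_D$ (and not merely set-theoretically), and that the two evaluation maps correspond under $V'\simeq V$, so that the kernels may legitimately be compared.
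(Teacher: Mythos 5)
Your proposal is correct and follows essentially the same route as the paper: pull back the defining sequence \eqref{sesonH} along the flat finite morphism $\phi$, identify the cokernel via the base-change isomorphism $\phi^*j_*\simeq i_*(\phi|_D)^*$ together with $\phi|_D^*\mathcal{I}_{Z(V')}\simeq\mathcal{I}_{Z(V)}$, and conclude semistability from \cite[Lemma 1.17]{Mar}. The only difference is that you spell out details the paper leaves implicit (the square being Cartesian, injectivity of $\phi|_D^*$ on sections, and the flatness argument identifying the pulled-back ideal sheaf), which strengthens rather than changes the argument.
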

\begin{proof} Note that,
  $A= (L|_D)^{\otimes m} \simeq \phi|_D^*\mcO_H(m)\,. $ This gives
  $V=\phi|_D^*V'\subset \phi|_D^*H^0(H,\mcO_H(m))\subset H^0(D,A)\,.$
  Since $V=\phi|_D^*V'$, the closed subscheme $Z(V)$ maps to $Z(V')$
  under $\phi$, and we get the following commutative diagram.
\begin{displaymath}
 \xymatrix{Z(V)\,\ar@{^{(}->}[r]^{i'} \ar[d]^{\phi|_{Z(V)}} & D \ar[d]^{\phi|_D}\ar@{^{(}->}[r]^{i} &  X\ar[d]^{\phi} \\
             Z(V') \ar@{^{(}->}[r]^{j'} & H \ar@{^{(}->}[r]^{j} & \mbb{P}^N}
\end{displaymath}
Since $\phi$ is a finite, surjective and flat morphism, so are
$\phi|_D$ and $\phi|_{Z(V)}$. This implies that $Z(V)$ and $Z(V')$
have the same dimension.  Thus,
$\trm{codim}_D(Z(V))=\trm{codim}_H(Z(V')) =r$. This proves (a).

Consider the pullback of the exact sequence \eqref{sesonH} by $\phi$:
\begin{equation}\label{pbfromhyperplane}
 0\lr \phi^*\mathcal{F}_{H,\mcO_H(m),V'}\lr V\otimes \mcO_X\lr \phi^*{j}_*(\mcO_H(m)\otimes\mathcal{I}_{Z(V')})\lr 0\,.
\end{equation}
By \cite[Proposition
III.9.3]{RH1},
$$\phi^*{j}_*(\mcO_H(m)\otimes\mathcal{I}_{Z(V')})\simeq
{i}_*(\phi|_D)^*(\mcO_H(m)\otimes\mathcal{I}_{Z(V')})\simeq
i_*(A\otimes \phi|_D^*\mathcal{I}_{Z(V')})\,.$$ Note that
$\phi|_D^*\,\mathcal{I}_{Z(V')}\simeq
\mathcal{I}_{Z(V)}\subset\mcO_D$.  Hence, the short exact sequence
\eqref{pbfromhyperplane} becomes
\[0\lr \phi^*\mathcal{F}_{H,\mcO_H(m),V'}\lr V\otimes \mcO_{X}\lr
  i_*(A\otimes\mathcal{I}_{Z(V)})\lr 0\,.\] Therefore,
$\phi^*\mathcal{F}_{H,\mcO_H(m),V'}\simeq\mcF$. Further, since
$\mathcal{F}_{H,\mcO_H(m),V'}$ is $\mu_{\mcO(1)}$-stable, and $\phi$
is a finite morphism, by \cite[Lemma 1.17]{Mar}, $\mcF$ is
$\mu_{L}$-semistable.
\end{proof}
We prove Theorem \ref{calabiyauintro}.
\begin{proof}[Proof of Theorem \ref{calabiyauintro}.] Let
  $D\in\trm{sm}|L|$ be a divisor which is general in the linear
  system. By Lemma \ref{flatHtoD}, there is a finite morphism
  $\phi:X\lr\mbb{P}^N$ such that $\phi(D)$ is the hyperplane
  $H=Z(x_0)$. Consider the line bundle $A=(L|_D)^{\otimes m}$ on $D$
  for any $m>0$.  By Remark \ref{Remarkonflatfamily}, there is a flat
  family of rank $r$ reflexive sheaves (where $2\leq r\leq h^0(D,A)$)
  parametrized by
  $V\in\mcU_r=\{V\in G(r,H^0(D,A))\,|\,\trm{codim}_D Z(V)=r\}$. Each
  $V\in\mcU_r$ corresponds to the reflexive sheaf
  $\mcF\simeq\mathcal{F}_{D,\mcL|_D^{\otimes m},V}$ .

  Let $V=\phi^*V'$, where $V'\in G(s,H^0(H,\mcO_H(m))$ is such that
  $\trm{codim}_H Z(V')=s$. Then by Lemma \ref{pbfromhpisstable},
  $V\in\mcU_s$ and $\mcF$ is $\mu_L$-semistable.  Note that $s$ can
  only vary in the range
  $2\leq s\leq h^0(H,\mcO_H(m))={N-1+m\choose m}$. Hence, for any $r$
  in the range $2\leq r\leq {N-1+m\choose m}$, we have a $V\in\mcU_r$
  with the corresponding $\mathcal{F}_{D,A,V}$ being
  $\mu_{L}$-semistable. By \cite[Proposition 2.3.1]{HL}, semistability
  is an open condition in flat families. Therefore, for a general
  $V\in G(r,H^0(D,A))$ where $2\leq r\leq {N-1+m\choose m}$, $\mcF$ is
  $\mu_L$-semistable.
\end{proof}
\subsection{(Semi)stability in case of projective space}\label{caseprojspace}
Suppose ${X=\mbP^N}$ for $N\geq 2$.
\subsubsection{} We first consider the case $L=\mcO(2)$ and $r=2$.
Let $D\in \trm{sm}|\mcO(2)|$ be a degree two hypersurface. Let $A$ be
an ample, globally generated line bundle on $D$. We now discuss the
$\mu_{\mcO(1)}$-stability of the rank 2 reflexive sheaf $\mcF$ where
$V\in G(2,H^0(D,A))$.

We recall the concept of normalization of a torsion-free sheaf
\cite[Chapter II, $\mathcal{x}\,$1.2]{OK}. A torsion-free sheaf $E$
of rank 2 over $X=\mathbb{P}^N$ has a uniquely determined integer
$k_E$ associated to it, namely,
\begin{equation}\label{eqn29}
k_E  =  -\frac{c_1(E)}{2} \trm{ if } c_1(E)\trm{  even, and } k_E=-\frac{c_1(E)+1}{2} \trm{ for } c_1(E)\trm{  odd}\,. 
\end{equation}
Note that, $c_1(E(k_E))\in \{0,-1\}$. We set $E_{\trm{norm}}:=E(k_E)$,
and call $E$ normalized if $E=E_{\trm{norm}}$.
\begin{reflexstab}\label{okolemma}
  \cite[Lemma II.1.2.5]{OK} A reflexive sheaf $E$ of rank 2 over
  $X=\mathbb{P}^N$ is stable if and only if $E_{\trm{norm}}$ has no
  sections : $H^0(X ,E_{\trm{norm}}) = 0.$ If $c_1 (E)$ is even, then
  $E$ is semistable if and only if
  $H^0(X , E_{\trm{norm}} (-1)) = 0. $
\end{reflexstab}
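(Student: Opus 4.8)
The plan is to reduce everything to the interplay between rank-one subsheaves and global sections after twisting. First I would record that $\mu_{\mcO(1)}$-(semi)stability is unchanged under tensoring by a line bundle: for any integer $k$ and any subsheaf $G\subset E$ one has $\mu_{\mcO(1)}(G(k))=\mu_{\mcO(1)}(G)+k$ and likewise for $E$, so the defining slope inequalities are preserved verbatim. Hence $E$ is (semi)stable if and only if $E_{\trm{norm}}=E(k_E)$ is, and by \eqref{eqn29} I may assume from the start that $c_1(E)\in\{0,-1\}$, i.e. $\mu_{\mcO(1)}(E)\in\{0,-\tfrac12\}$. Since $E$ has rank $2$, (semi)stability only needs to be tested against rank-one subsheaves, which I will further assume to be saturated (replacing a subsheaf by its saturation can only increase its slope, as the quotient is a torsion sheaf).

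The core of the argument is a dictionary between saturated rank-one subsheaves and sections of twists of $E$. On the one hand, a nonzero section $s\in H^0(X,E(-a))$ is the same datum as a nonzero map $\mcO_X(a)\lr E$; because $E$ is torsion-free this map is injective, giving a rank-one subsheaf of slope $a$. Conversely, given a rank-one subsheaf $F\hra E$ with $c_1(F)=a$, I would pass to its double dual: since $X=\mbP^N$ is smooth and $\trm{Pic}\,X=\mbb{Z}$, the reflexive rank-one sheaf $F^{\vee\vee}$ is the line bundle $\mcO_X(a)$, and $F\hra F^{\vee\vee}$ is an isomorphism away from a closed set of codimension $\geq 2$. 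The essential point is that the inclusion $F\hra E$ extends to a map $\mcO_X(a)\lr E$: as $E$ is reflexive, the restriction $\trm{Hom}(\mcO_X(a),E)\lr\trm{Hom}(F,E)$ is an isomorphism, because homomorphisms into a reflexive sheaf extend uniquely across loci of codimension $\geq 2$ (the reflexive extension property, cf. \cite{RH}). This yields a nonzero section of $E(-a)$. I expect this extension step to be the main obstacle, since it is precisely where reflexivity — rather than local freeness — must be exploited.

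With the dictionary in hand, the two assertions become bookkeeping. I would first note a monotonicity: if $0\neq s\in H^0(X,E(-a))$ and $a'\leq a$, then choosing $0\neq t\in H^0(X,\mcO_X(a-a'))$ the product $ts$ is a nonzero section of $E(-a')$, so the smallest destabilizing twist is the decisive one. Now $E$ fails to be stable exactly when it admits a rank-one subsheaf of slope $\geq\mu_{\mcO(1)}(E)$, i.e. when $H^0(X,E(-a))\neq 0$ for some integer $a\geq\mu_{\mcO(1)}(E)$; by monotonicity this happens iff it happens for the least such integer, which is $a=0$ in both the even and odd normalized cases. Hence $E$ is stable iff $H^0(X,E_{\trm{norm}})=0$. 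Similarly $E$ fails to be semistable exactly when $H^0(X,E(-a))\neq 0$ for some integer $a>\mu_{\mcO(1)}(E)$; when $c_1(E)$ is even the least such $a$ is $1$, giving semistability iff $H^0(X,E_{\trm{norm}}(-1))=0$, while when $c_1(E)$ is odd the least such $a$ is again $0$, so semistability there coincides with stability and needs no separate statement. This matches the claim in both cases.
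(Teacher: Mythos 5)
The paper does not prove this lemma at all: it is quoted as a known result from \cite[Lemma II.1.2.5]{OK}, so there is no in-paper argument to compare yours against. Your reconstruction is correct, and it is in substance the standard proof from that reference (stated there for bundles) together with its extension to reflexive sheaves in the spirit of \cite{RH}: twist-invariance of slope (semi)stability reduces to the normalized case; the dictionary between rank-one subsheaves and nonzero sections of twists $E(-a)$ handles both directions; and the parity bookkeeping yields the two displayed equivalences, with the odd case of semistability collapsing into stability, which is why the statement is silent there. In particular, you correctly identify and resolve the only point where reflexivity rather than local freeness is essential: a rank-one subsheaf $F\hra E$ with $F^{\vee\vee}\simeq\mcO_X(a)$ induces a section of $E(-a)$ over the open set where $F=F^{\vee\vee}$, and this section extends across the codimension-$\geq 2$ complement precisely because $E(-a)$ is reflexive, hence normal (\cite[Proposition 1.6]{RH}); the companion facts that a rank-one reflexive sheaf on $\mbb{P}^N$ is a line bundle (\cite[Proposition 1.9]{RH}) and that saturation only increases slope are also used correctly. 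I have no gaps to report.
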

\begin{alterpf}\label{firstcase}
  Consider $L=\mcO(2)$ on $X=\mathbb{P}^N$ for $N\geq 2$. Let
  $D\in\emph{sm}|L|$, $A$ be an ample, globally generated line bundle
  on $D$ and $V\in G(2,H^0(D,A))$ be a 2-dimensional subspace. Then
  the associated rank two $\mcF$ is $\mu_{\mcO(1)}$-semistable.
\end{alterpf}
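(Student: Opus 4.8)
The plan is to invoke the rank two normalization criterion of Lemma \ref{okolemma} directly, with all the substantive input already supplied by Proposition \ref{prelim}. First I would record the numerical data. By Proposition \ref{prelim}(c) we have $\trm{det}\,\mcF\simeq\mcL^{\vee}=\mcO(-2)$, so $c_1(\mcF)=-2$ with respect to $\mcO(1)$, which is \emph{even}. Applying the normalization formula \eqref{eqn29} in the even case gives $k_{\mcF}=-c_1(\mcF)/2=1$, hence $\mcF_{\trm{norm}}=\mcF(1)$; one checks $c_1(\mcF(1))=0$, consistent with the definition of the normalization.

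Next, since $c_1(\mcF)$ is even, the second assertion of Lemma \ref{okolemma} applies: the rank two reflexive sheaf $\mcF$ is semistable if and only if $H^0(X,\mcF_{\trm{norm}}(-1))=0$. The key observation is that the two twists cancel exactly, $\mcF_{\trm{norm}}(-1)=\mcF(1)(-1)=\mcF$, so the semistability criterion reduces precisely to the condition $H^0(X,\mcF)=0$. This vanishing is already available from Proposition \ref{prelim}(d). Therefore the criterion of Lemma \ref{okolemma} is met and $\mcF$ is $\mu_{\mcO(1)}$-semistable.

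There is essentially no genuine obstacle here beyond matching the normalization bookkeeping correctly; the real content has been packaged in advance into Proposition \ref{prelim}, which furnishes both $\trm{det}\,\mcF$ and the vanishing of global sections, and into the rank two criterion of Lemma \ref{okolemma}. The only point I would verify carefully is that reflexivity of rank two—rather than local freeness—is enough to apply Lemma \ref{okolemma}, which it is; consequently no genericity hypothesis on $V$ is required, in agreement with the claim that the statement holds for every $2$-dimensional subspace $V\in G(2,H^0(D,A))$. Note also that this argument yields only semistability and not stability, since $H^0(X,\mcF_{\trm{norm}})=H^0(X,\mcF(1))$ need not vanish, which is exactly what the statement asserts.
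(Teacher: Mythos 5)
Your proposal is correct and follows exactly the paper's own argument: normalize $\mcF$ using $c_1(\mcF)=-2$ to get $\mcF_{\trm{norm}}=\mcF(1)$, then apply the even-$c_1$ semistability criterion of Lemma \ref{okolemma}, which reduces to the vanishing $H^0(X,\mcF)=0$ from Proposition \ref{prelim}(d). The paper's proof is just a terser version of the same computation.
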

\begin{proof} If $\mcL=\mcO(2)$, then $c_1(\mcF)=-2$. Then,
  $k_{\mcF}=1.$ Thus, ${\mcF}_{\trm{norm}}=\mcF(1)$.  Now, the result
  follows from the fact that $H^0(X,\mcF)=0$ (Proposition \ref{prelim}
  (d)).
\end{proof}

\subsubsection{}
We consider the case $L=\mcO(2l)$ and $r=2$.

Start with $L=\mcO(d)$ for any $d>0$. Consider $D\in \trm{sm}|L|$
where $i:D\hra X$ denotes the inclusion. Suppose $A$ is an ample,
globally generated line bundle on $D$. For $r\geq 2$, let
$V\in G(r,H^0(D,A))$ be such that $\trm{codim}_D
Z(V)=r$.

We restrict the exact sequence \eqref{definingses} to the open set
$\tildeU=X\setminus Z(V)$ to get:
\begin{equation}\label{eq6}
 0\lr\mcF|_{\tildeU}\lr V\otimes\mcO_{\tildeU}\lr i_*(A\otimes\mcI)|_{\tildeU}\lr 0.
\end{equation}
Let $U=D\setminus Z(V)$, an open subset of $D$. By \cite[Proposition
II.6.5]{RH1}, $U$ is a divisor in $\tildeU$ and we have the following
commutative diagram.
\begin{displaymath}
\xymatrix{U\,\ar@{^{(}->}[r] \ar@{^{(}->}[d]_{i|_{U}} & D\, \ar@{^{(}->}[d]^i\\
\tildeU\,\ar@{^{(}->}[r] & X} 
\end{displaymath}
By \cite[Proposition III.9.3]{RH1},
$i_*(A\otimes\mcI)|_{\tildeU}\simeq
(i|_{U})_*(A|_{U}\otimes\mcI|_U)\simeq (i|_U)_*(A|_U)$. The short
exact sequence \eqref{eq6} then becomes:
\begin{equation*}
 0\lr\mcF|_{\tildeU}\lr V\otimes\mcO_{\tildeU}\lr (i|_{U})_*(A|_{U})\lr 0\,.
\end{equation*}
Thus by \cite[Lemma 1.1]{Ab}, $\mcF|_{\tildeU}$ is an elementary
transformation, and hence a locally free sheaf of rank $r$ on
$\tildeU$. Then, $(i|_{U})^*\mcF|_{\tildeU}=\mcF|_{U}$ on $U\subset D$
is also locally free of rank $r$. Now, $U$ is an open subset of $D$
whose complement $Z(V)$ is of codimension $r\geq 2$ in $D$. This
implies that the stalks of $\mcF|_D$ are torsion-free.
\begin{FonD}\label{FonD}
 The sheaf $\mcF|_D$ is torsion-free of rank $r$.
\end{FonD}
Assume now that $r=2$. Thus, $\mcF$ and $\mcF|_D$ have rank
2. Restrict the exact sequence \eqref{definingses} to $D$ to get:
\begin{equation*}\label{eq9}
 0\lr K\lr \mcF|_D\lr V\otimes\mcO_D\lr A\otimes\mcI\lr 0\trm{ on } D.
\end{equation*}
Here $K$ denotes the kernel, which is torsion-free by Remark
\ref{FonD}. Note that $K$ is a sheaf of rank 1 on $D$. Consider the
kernel $M$ of the surjective evaluation map
$V\otimes \mcO_D\twoheadrightarrow A\otimes\mcI$. The sheaf $M$ is
reflexive, cf. \cite[Proposition 1.1]{RH}. Since $M$ is of rank 1, by
\cite[Proposition 1.9]{RH}, $M$ is a line bundle. By comparing
determinants, $M\simeq A^{\vee}$, and we get:
\begin{equation*}
0\lr A^{\vee}\lr V\otimes\mcO_D\lr A\otimes\mcI\lr 0\,. 
\end{equation*}
From the exact sequences above, we get the following exact sequence on
$D$:
\begin{equation}\label{eq10}
 0\lr K\lr\mcF|_D\lr A^{\vee}\lr 0.
\end{equation}
\newtheorem{Rmktfree}[stabonproj]{Remark}
\begin{Rmktfree}\label{Remark}
  As $\mcF|_D$ is torsion-free,
  $\emph{det}\,\mcF|_D\simeq(\emph{det}\mcF)|_D\simeq(L|_D)^{\vee}$.
\end{Rmktfree}
Thus, from the exact sequence \eqref{eq10}, the determinant of the
rank 1 torsion-free sheaf $K$ is:
$$\trm{det}\,K\simeq A\otimes \trm{det}\,\mcF|_D\simeq A\otimes(L|_D)^{\vee}\,.$$
\newtheorem{sstabonD}[stabonproj]{Proposition}
\begin{sstabonD}\label{sstabonD}
  Consider $\mcL=\mcO(2l)$ for $l>0$. Let $D\in\emph{sm}|L|$,
  $A=\mcO(l)|_D$ and $V\in G(2,H^0(D,A))$ such that
  $\emph{codim}_D Z(V)=2$. Then the torsion-free sheaf $\mcF|_D$ on
  $D$ is $\mu_{\mcO(1)|_D}$-semistable. Thus $\mcF$ is
  $\mu_{\mcO(1)}$-semistable.
\end{sstabonD}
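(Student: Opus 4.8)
The plan is to read off the $\mu_{\mcO(1)|_D}$-semistability of the rank two torsion-free sheaf $\mcF|_D$ directly from the short exact sequence \eqref{eq10}, namely $0\lr K\lr \mcF|_D\lr A^\vee\lr 0$, and then to transfer semistability from $D$ back to $X$. The first observation is numerical. With $\mcL=\mcO(2l)$ and $A=\mcO(l)|_D$, Remark \ref{Remark} gives $\det\mcF|_D\simeq\mcO(-2l)|_D$, the determinant computation preceding the statement gives $\det K\simeq A\otimes(L|_D)^\vee\simeq\mcO(-l)|_D$, and of course $A^\vee\simeq\mcO(-l)|_D$. Writing $\delta=(\mcO(1)|_D)^{N-1}>0$ and taking slopes with respect to $\mcO(1)|_D$ on the $(N-1)$-dimensional variety $D$, one finds
\[\mu(\mcF|_D)=\frac{-2l\,\delta}{2}=-l\,\delta=\mu(K)=\mu(A^\vee).\]
Thus \eqref{eq10} realizes $\mcF|_D$ as an extension whose sub and quotient are rank one torsion-free sheaves of the \emph{same} slope as $\mcF|_D$ itself.

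To conclude semistability it then suffices to bound rank one subsheaves. Let $S\subset\mcF|_D$ have rank one; passing to its saturation only raises $\mu(S)$, so I may assume $S$ saturated, hence reflexive, hence a line bundle since $D$ is smooth. Compose $S$ with the quotient map in \eqref{eq10} to get $S\lr A^\vee$. If this map is zero then $S$ factors as an inclusion $S\hra K$ of rank one torsion-free sheaves, whence $\mu(S)\le\mu(K)=\mu(\mcF|_D)$; if it is nonzero then it is injective (a nonzero map out of a line bundle into a rank one torsion-free sheaf), whence $\mu(S)\le\mu(A^\vee)=\mu(\mcF|_D)$. In either case $\mu(S)\le\mu(\mcF|_D)$, which is exactly $\mu_{\mcO(1)|_D}$-semistability. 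Equivalently, one may simply invoke that an extension of two semistable sheaves of equal slope is semistable.

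For the descent to $X$, suppose toward a contradiction that $E\subset\mcF$ is a saturated subsheaf with $\mu_{\mcO(1)}(E)>\mu_{\mcO(1)}(\mcF)$. Then $\mcF/E$ is torsion-free, so tensoring $0\lr\mcO_X(-D)\lr\mcO_X\lr\mcO_D\lr 0$ with $\mcF/E$ shows $\mathcal{T}or_1(\mcF/E,\mcO_D)=0$; restricting $0\lr E\lr\mcF\lr\mcF/E\lr 0$ to $D$ therefore keeps $E|_D\hra\mcF|_D$ injective, exhibiting $E|_D$ as a genuine subsheaf of $\mcF|_D$. Since $D\equiv 2l\,H$ with $H=\mcO(1)$, a one-line intersection computation gives $\mu_{\mcO(1)|_D}(E|_D)=2l\,\mu_{\mcO(1)}(E)$ and likewise for $\mcF|_D$, so the assumed strict inequality on $X$ propagates to $\mu_{\mcO(1)|_D}(E|_D)>\mu_{\mcO(1)|_D}(\mcF|_D)$, contradicting the semistability just proved. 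Hence $\mcF$ is $\mu_{\mcO(1)}$-semistable.

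The genuinely delicate point is the exact coincidence of the three slopes in \eqref{eq10}, which hinges on the precise choice $A=\mcO(l)|_D$ for $L=\mcO(2l)$ and which is also the reason only semistability, and not stability, can hold. The one step requiring care is the descent: the restricted map $E|_D\lr\mcF|_D$ must remain injective, and this is exactly what the vanishing of $\mathcal{T}or_1(\mcF/E,\mcO_D)$ — guaranteed by choosing $E$ saturated so that $\mcF/E$ is torsion-free — supplies.
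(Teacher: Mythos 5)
Your proof is correct and takes essentially the same route as the paper: it rests on the same extension \eqref{eq10}, the same numerical coincidence that $K$, $A^{\vee}$ and $\mcF|_D$ all have equal $\mu_{\mcO(1)|_D}$-slope (forced by $A=\mcO(l)|_D$ and $L=\mcO(2l)$), semistability of $\mcF|_D$ as an extension of equal-slope rank-one sheaves (which the paper quotes from \cite[Lemma 1.10 (3)]{Mar} and you prove by hand by bounding rank-one subsheaves), and the passage from semistability of $\mcF|_D$ to that of $\mcF$ (which the paper cites from \cite[Chapter 11]{LP} and you prove via $\mathcal{T}or_1$-vanishing for the saturated quotient and the slope scaling $\mu_{\mcO(1)|_D}(E|_D)=2l\,\mu_{\mcO(1)}(E)$). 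The one inaccurate step is harmless: a saturated subsheaf of the merely torsion-free sheaf $\mcF|_D$ need not be reflexive (on a smooth surface, $\mathcal{I}_p\oplus 0\subset\mathcal{I}_p\oplus\mathcal{I}_q$ is saturated but not reflexive), so your $S$ need not be a line bundle; however, the case analysis goes through verbatim for any rank-one torsion-free $S$, since a nonzero map from such an $S$ to $A^{\vee}$ is still automatically injective, and the slope inequality for an injection of rank-one torsion-free sheaves needs only that the quotient is torsion.
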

\begin{proof} Both $K$ and $A^{\vee}$ are rank one torsion-free
  sheaves on $D$ and hence are $\mu_{\mcO(1)|_D}$-stable. Further they
  have the same determinant. Indeed,
$$A^{\vee}\simeq \mcO(-l)|_D\trm{ and } \trm{det}\,K\simeq A\otimes(L|_D)^{\vee}\simeq \mcO(l)|_D\otimes \mcO(-2l)|_D\simeq\mcO(-l)|_D\,.$$

Hence $\mcF|_D$ is a torsion-free sheaf which is an extension of
$\mu_{\mcO(1)|_D}$-stable sheaves with the same slope. By \cite[Lemma
1.10 (3)]{Mar}, $\mcF|_D$ is $\mu_{\mcO(1)|_D}$-semistable. This
implies that $\mcF$ is $\mu_{\mcO(1)}$-semistable, cf. \cite[Chapter
11]{LP}.
\end{proof}
We collect our observations to prove Theorem \ref{finalprojintro}.
\begin{proof}[Proof of Theorem \ref{finalprojintro}.] Part (a) of the theorem follows from Theorem \ref{stabonprojintro}. Part (b) of the theorem follows from Corollary \ref{firstcase}. Proposition \ref{sstabonD} proves part (c) of the theorem. Finally, part (d) follows from Theorem \ref{calabiyauintro}. 
\end{proof}
\begin{exceptions}\label{exceptions}
  If $\mcL=\mcO(d)$, then for all $r$ such that $(r,d)=1$, a
  $\mu_{\mcO(1)}$-semistable $\mcF$ is in fact stable. Indeed, if
  $D\in |\mcO(d)|$, then
  $\emph{deg}\,\mcF=c_1(\mcF)\cdot (\mcO(1)^{N-1})=-d$ and
  $\emph{rank}\,\mcF=r$. Since the rank and degree are coprime, by
  \cite[Lemma 1.2.14]{HL}, semistability and stability coincide.
\end{exceptions}
Theorem \ref{finalprojintro} shows that there are families of
$\mu_{\mcO(1)}$-(semi)stable rank $r$ reflexive sheaves on the
projective space with any prescribed $c_1$.

The following lemma shows that if we weaken our condition on $A$ in
part (d) of Theorem \ref{finalprojintro}, then we may not get the
required (semi)stability.
\begin{notsemist}\label{notsemist}
  Consider $L=\mcO_X(d)$ ($d>0$) on $X=\mbb{P}^N$ for $N\geq 2$. Let
  $D\in |L|$ be a general smooth and irreducible hypersurface. Let
  $A=\mcO_X(l)|_D$ and $V\subset H^0(D,A)$ be a general
  $r$-dimensional subspace ($r\geq 2$) such that $r$ and $l$ satisfy
  the following condition:
  $$0<l<\frac{d(r-1)}{r}\,.$$
  Then $\mcF$ is not $\mu_{\mcO(1)}$-semistable.
 \end{notsemist}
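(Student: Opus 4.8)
The plan is to exhibit an explicit subsheaf of $\mcF$ of rank $r-1$ whose slope strictly exceeds $\mu_{\mcO(1)}(\mcF)$, thereby directly violating semistability. The source of this subsheaf is the ambient projective space. Since $0<l<d(r-1)/r<d$, the restriction map $H^0(\mbP^N,\mcO(l))\to H^0(D,\mcO(l)|_D)=H^0(D,A)$ is an isomorphism: its kernel $H^0(\mbP^N,\mcO(l-d))$ vanishes because $l<d$, and it is surjective since $H^1(\mbP^N,\mcO(l-d))=0$ on $\mbP^N$ for $N\geq 2$. Hence a general $V\subset H^0(D,A)$ is the image of a unique, and general, $r$-dimensional subspace $\widetilde V\subset H^0(\mbP^N,\mcO(l))$.

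First I would build, on $X=\mbP^N$ itself, the kernel $\mathcal{G}$ of the evaluation $\widetilde V\otimes\mcO_X\twoheadrightarrow\mcO(l)\otimes\mathcal{I}_{Z(\widetilde V)}$, where $Z(\widetilde V)$ is the base locus of $\mbP\widetilde V$ in $\mbP^N$; this fits into $0\to\mathcal{G}\to\widetilde V\otimes\mcO_X\to\mcO(l)\otimes\mathcal{I}_{Z(\widetilde V)}\to 0$. The sheaf $\mathcal{G}$ has rank $r-1$, and since for general $\widetilde V$ the base locus $Z(\widetilde V)$ has codimension $\min(r,N+1)\geq 2$ in $\mbP^N$, its ideal sheaf has trivial determinant; thus $\det\mathcal{G}\simeq\mcO(-l)$ and $\mu_{\mcO(1)}(\mathcal{G})=-l/(r-1)$.

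Next I would identify $\mathcal{G}$ with a subsheaf of $\mcF$. The key point is that the defining surjection $V\otimes\mcO_X\to i_*(A\otimes\mcI)$ of \eqref{definingses} factors through the evaluation above. Indeed, restricting the map $\widetilde V\otimes\mcO_X\to\mcO(l)$ to $D$ and pushing forward by $i$ recovers exactly $V\otimes\mcO_X\to i_*(A\otimes\mcI)$, because the image of $\mathcal{I}_{Z(\widetilde V)}$ in $\mcO_D$ equals $\mcI=\mathcal{I}_{Z(V)}$ (the base locus $Z(V)$ on $D$ is cut out by the same restricted sections as $Z(\widetilde V)\cap D$). This yields a surjection $\mcO(l)\otimes\mathcal{I}_{Z(\widetilde V)}\twoheadrightarrow i_*(A\otimes\mcI)$ through which the defining map factors, so that $\mathcal{G}=\ker(V\otimes\mcO_X\to\mcO(l)\otimes\mathcal{I}_{Z(\widetilde V)})$ is contained in $\mcF=\ker(V\otimes\mcO_X\to i_*(A\otimes\mcI))$.

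Finally the slope comparison closes the argument. By Proposition \ref{prelim}, $\mcF$ has rank $r$ and $\det\mcF\simeq\mcO_X(-d)$, so $\mu_{\mcO(1)}(\mcF)=-d/r$, while $\mu_{\mcO(1)}(\mathcal{G})=-l/(r-1)$ with $0<r-1<r$. Cross-multiplying the positive denominators, the hypothesis $l<d(r-1)/r$ is precisely $-l/(r-1)>-d/r$, i.e. $\mu_{\mcO(1)}(\mathcal{G})>\mu_{\mcO(1)}(\mcF)$, so $\mcF$ is not $\mu_{\mcO(1)}$-semistable. I expect the only delicate step to be the factorization of the previous paragraph — checking the compatibility of the two quotient maps at the level of sheaves, in particular that restricting the ambient ideal sheaf $\mathcal{I}_{Z(\widetilde V)}$ to $D$ produces $\mcI$ — together with confirming that for general $V$ the base locus $Z(\widetilde V)$ has codimension $\geq 2$, so that the computation $\det\mathcal{G}\simeq\mcO(-l)$ is valid.
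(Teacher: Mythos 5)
Your proof is correct and follows essentially the same route as the paper: both lift $V$ to an $r$-dimensional subspace of $H^0(\mbb{P}^N,\mathcal{O}(l))$ via the restriction isomorphism, form the rank-$(r-1)$ kernel of the evaluation $V\otimes\mathcal{O}_X\to\mathcal{O}(l)\otimes\mathcal{I}_{Z_X(V)}$ on the ambient space, exhibit it as a subsheaf of $\mathcal{F}_{D,A,V}$ by comparing the two exact sequences, and conclude with the identical slope computation. Your write-up merely makes explicit two points the paper leaves implicit (the cohomological reason the restriction map on sections is an isomorphism, and the factorization of the defining surjection that yields the inclusion), so it matches the paper's argument.
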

 Note that if $l$ is an integer such that $0<l<\frac{d}{2}$, then for
 any $r\geq 2$, the above inequality is satisfied.
\begin{proof} Let $l$ and $r$ be integers that satisfy the given condition, i.e.
  \[0<l<\frac{d(r-1)}{r}\,.\] Consider the line bundle $\mcO_X(l)$ on
  the projective space $X$. Since $D\in |\mcO_X(d)|$ and
  $A=\mcO_X(l)|_D$, we have $H^0(X,\mcO_X(l))\simeq H^0(D,A)$. Choose
  a general $V\in G(r,H^0(X,\mcO_X(l)))\simeq G(r,H^0(D,A))$.

  Let $Z_X(V)$ (resp. $Z_D(V)$) denote the closed subscheme of $X$
  (resp. $D$), defined by the vanishing of sections of
  $V\subset H^0(X,\mcO_X(l))$ (resp. $V\subset H^0(D,A)$). In fact
  $Z_D(V)=Z_X(V)\cap D$. Note that for a general $D$ and $V$,
  $\trm{codim}_X Z_X(V)=\trm{codim}_D Z_D(V)=r$. We get the following
  exact sequence on $X$:
\begin{equation*}\label{ceg-1}
  0\lr M\lr V\otimes \mcO_X\lr \mcO_X(l)\otimes\mathcal{I}_{Z_X(V)}\lr 0\,.
\end{equation*}
Here $M$ is a torsion-free (in fact reflexive) sheaf of rank $r-1$ and
determinant $\mcO_X(-l)$. Thus $\mu_{\mcO_X(1)}(M)=\frac{-l}{r-1}$.

Also, corresponding to the triple $(D,A,V)$, we get the following
exact sequence on $X$ (where $i:D\hra X$ denotes the inclusion):
\begin{equation*}\label{ceg-2}
 0\lr \mcF\lr V\otimes\mcO_X\lr i_*(A\otimes\mcID)\lr 0\,.
\end{equation*}
Note that $\mcF$ is of rank $r$ and $\trm{det}\,\mcF=\mcO_X(-d)$. We
get $\mu_{\mcO_X(1)}(\mcF)=\frac{-d}{r}$.

Comparing the above exact sequences, there is an inclusion
$M\hra\mcF$. But
\[\mu_{\mcO_X(1)}(M)=\frac{-l}{r-1}>\frac{-d(r-1)}{r(r-1)}= \frac{-d}{r}=\mu_{\mcO_X(1)}(\mcF)\,.\]
Hence $\mcF$ is not $\mu_{\mcO(1)}$-semistable.
\end{proof}
\section{An application to Kernel bundles}\label{Applications}
In this section we see an application of the techniques used so
far. As before, we work over the field of complex numbers.

Consider the line bundle $\mcO(d)$ ($d>0$) on $\mbb{P}^n$, $n\geq
2$. We have the following exact sequence corresponding to the line
bundle $\mcO(d)$ where $M_{\mcO(d)}$ is the kernel vector bundle.
\begin{equation}\label{kernel1}
0\lr M_{\mcO(d)}\lr H^0(\mcO(d))\otimes\mcO_{\mbb{P}^n}\lr \mcO(d)\lr 0\,.
\end{equation}
Flenner \cite[Corollary 2.2]{HF} proved that the kernel bundles
$M_{\mcO(d)}$ are $\mu_{\mcO(1)}$-semistable.
\begin{hpstable}\label{hpstable}
  In fact, in case of the line bundle $\mcO(1)$ on $\mbb{P}^n$, the
  kernel bundle $M_{\mcO(1)}$ is stable. Indeed, note that
  $M_{\mcO(1)}$ is a locally free sheaf on $\mbb{P}^n$ with
  $\emph{det}\,M_{\mcO(1)}=\mcO(-1)$. Further, there is a surjective
  morphism from a trivial bundle on $\mbb{P}^n$ to
  $M_{\mcO(1)}^{\vee}$. For, dualizing \eqref{kernel1} when $d=1$, we
  get the exact sequence:
  \[0\lr\mcO(-1)\lr H^0(\mcO(1))^{\vee}\otimes \mcO_{\mbb{P}^n}\lr
    M_{\mcO(1)}^{\vee}\lr 0\,.\] Finally, since
  $H^0(X,M_{\mcO(1)})=0$, by Remark \ref{elementofpf}, $M_{\mcO(1)}$
  is $\mu_{\mcO(1)}$-stable.
\end{hpstable}
\begin{Kernelbundle}\label{kernel}
  Let $X$ be an irreducible, smooth projective variety of dimension
  $n$ over $\mbb{C}$. Consider an ample and globally generated line
  bundle $\mcL$ on $X$. For a general subspace $W\subset H^0(X,\mcL)$
  of dimension $n+1$, the kernel bundle $M_{\mcL,W}$ associated to
  $(\mcL,W)$ is $\mu_{\mcL}$-polystable.
\end{Kernelbundle}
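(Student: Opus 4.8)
The plan is to exhibit $M_{\mcL,W}$ as the pullback of the stable kernel bundle $M_{\mcO(1)}$ on $\mbP^n$ along a suitable finite morphism, and then to promote polystability through that pullback. First I would fix a general $(n+1)$-dimensional subspace $W\subset H^0(X,\mcL)$. Since $\mcL$ is globally generated and $\dim X=n$, the argument of Lemma \ref{flatHtoD} shows that for general $W$ the linear system $\mbP W$ is base-point free; a basis $s_0,\dots,s_n$ of $W$ then defines a morphism $\phi\colon X\lr\mbP^n$ with $\phi^*\mcO(1)=\mcL$ and $\phi^*x_i=s_i$. As $\mcL$ is ample, $\phi$ is finite and surjective, hence flat; the surjectivity of $\phi$ forces $\phi^*\colon H^0(\mbP^n,\mcO(1))\lr W$ to be injective, hence an isomorphism.

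Next I would pull back the Euler sequence $0\lr M_{\mcO(1)}\lr H^0(\mcO(1))\otimes\mcO_{\mbP^n}\lr\mcO(1)\lr 0$, where $M_{\mcO(1)}\simeq\Omega^1_{\mbP^n}(1)$. Because $\mcO(1)$ is locally free the pullback remains exact, and using $\phi^*\mcO(1)=\mcL$ together with the identification $H^0(\mcO(1))\simeq W$ it becomes $0\lr\phi^*M_{\mcO(1)}\lr W\otimes\mcO_X\lr\mcL\lr 0$, whose right-hand map is exactly the evaluation map defining $M_{\mcL,W}$. Hence $M_{\mcL,W}\simeq\phi^*M_{\mcO(1)}$. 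By Remark \ref{hpstable} the bundle $M_{\mcO(1)}$ is $\mu_{\mcO(1)}$-stable, in particular $\mu_{\mcO(1)}$-polystable.

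It remains to show that the finite pullback of a polystable sheaf is again polystable. Here \cite[Lemma 1.17]{Mar}, the tool used for the earlier theorems, only delivers $\mu_{\mcL}$-semistability of $\phi^*M_{\mcO(1)}$, so upgrading this to polystability is the real content. The route I would take is to pass to the Galois closure $\psi\colon\widetilde X\lr\mbP^n$ of $\phi$, with group $G$, and write $\rho\colon\widetilde X\lr X$ for the induced finite surjection, so that $\psi=\phi\circ\rho$. The pullback $\psi^*M_{\mcO(1)}$ is $\mu$-semistable and carries a $G$-action, so its socle, the maximal polystable subsheaf of the same slope, is canonical and therefore $G$-invariant; by Galois descent it comes from a nonzero subsheaf of $M_{\mcO(1)}$. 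Since $M_{\mcO(1)}$ is stable and slopes scale uniformly under finite pullback, this subsheaf must be all of $M_{\mcO(1)}$, so the socle is everything and $\psi^*M_{\mcO(1)}$ is polystable. Finally, as $\psi^*M_{\mcO(1)}\simeq\rho^*M_{\mcL,W}$ with $\rho$ finite and surjective, polystability descends along $\rho$ to give that $M_{\mcL,W}$ is $\mu_{\mcL}$-polystable.

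The main obstacle is precisely this last step. The finite-morphism machinery exploited throughout the paper is adapted to semistability, which is preserved under pullback and open in flat families, whereas polystability is neither; one therefore cannot simply quote \cite[Lemma 1.17]{Mar} and the openness result \cite[Proposition 2.3.1]{HL}. Making the socle-descent argument rigorous, ensuring both that the $G$-invariant socle descends to $\mbP^n$ and that polystability itself descends along the finite cover $\rho$, is where the work lies. An analytic alternative, producing a Hermitian--Einstein metric on $\Omega^1_{\mbP^n}(1)$ and pulling it back, is available in principle but delicate, since $\phi^*\omega_{\mathrm{FS}}$ degenerates along the ramification locus of $\phi$.
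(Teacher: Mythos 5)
Your reduction coincides exactly with the paper's: for general $W$ the system $\mathbb{P}W$ is base-point free, the induced finite surjective $\phi\colon X\to\mathbb{P}^n$ satisfies $\phi^*\mcO(1)\simeq \mcL$ and $\phi^*H^0(\mcO(1))\simeq W$, pulling back the Euler-type sequence gives $M_{\mcL,W}\simeq\phi^*M_{\mcO(1)}$, and $M_{\mcO(1)}$ is $\mu_{\mcO(1)}$-stable by Remark \ref{hpstable}. The divergence is in the last step. You correctly observe that \cite[Lemma 1.17]{Mar} and the openness of semistability \cite[Proposition 2.3.1]{HL} cannot deliver polystability; the paper resolves this not by an argument but by a citation: the statement you set out to prove (the pullback of a stable bundle under a finite morphism in characteristic zero is polystable) is precisely Kempf's theorem \cite[Theorem 1]{GK}, and the paper's proof ends by invoking it. Thus what you flag as ``the main obstacle'' is a known, citable result rather than an open gap, and your Galois-closure/socle/descent sketch is in fact the standard proof of it (compare also \cite[Section 3.2]{HL}). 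Your sketch is essentially correct and can be completed along standard lines: the Galois closure $\widetilde{X}\to\mathbb{P}^n$ is only normal, not smooth, but slope stability works on normal projective varieties; the socle of the semistable pullback is canonical, hence Galois-invariant, and descends by taking invariants of the pushforward; stability of $M_{\mcO(1)}$ forces the descended subsheaf to be everything; and the remaining descent of polystability along the Galois cover $\rho\colon\widetilde{X}\to X$ follows because $\rho^*$ is (split) injective on $\mathrm{Ext}^1$-groups via the trace map in characteristic zero, so a non-split extension of the socle of $M_{\mcL,W}$ by a stable quotient of the same slope would survive pullback, contradicting polystability upstairs. In short, both proofs are the same reduction to $\mathbb{P}^n$; yours buys self-containedness at the price of redoing Kempf's theorem (with nontrivial bookkeeping about saturations and codimension-two loci still to be written out), while the paper's buys brevity by delegating exactly that step to \cite{GK}.
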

\begin{proof} For a general $W\in G(n+1, H^0(X,L))$, the corresponding
  linear system $\mbP W$ is basepoint-free and gives a finite
  surjective morphism $\psi_W:X\lr \mbP^n\,.$ Note that
  $\psi_W^*\mcO(1)\simeq\mcL$ and that
  $\psi_W^*H^0(\mbP^n,\mcO(1))\simeq W$. We have the kernel bundle
  $M_{\mcO(1)}$ on $\mbb{P}^n$ defined by the exact sequence of the
  form \eqref{kernel1} for $d=1$. Pullback this sequence to $X$, we
  get:
\begin{equation*}
 0\lr \psi_W^*M_{\mcO(1)}\lr W\otimes \mcO_{X}\lr \mcL\lr 0\,.
\end{equation*}
Hence, $\psi_W^*M_{\mcO(1)}$ is the kernel bundle on $X$ associated to
$(\mcL,W)$, i.e. $\psi_W^*M_{\mcO(1)}\simeq M_{\mcL,W}$. Again, since
$M_{\mcO(1)}$ is $\mu_{\mcO(1)}$-stable, by \cite[Lemma 1.17]{Mar},
$M_{\mcL,W}$ is $\mu_L$-semistable. In fact, by a result of Kempf
\cite[Theorem 1]{GK}, $M_{\mcL,W}$ is $\mu_L$-polystable on $X$.
\end{proof}


\begin{thebibliography}{99}
\bibitem{Ab} Abe, T. (2007), The elementary transformation of vector
  bundles on regular schemes, Trans. Amer. Math. Soc. \textbf{359}
  (9), 4285-4295.
\bibitem{AP} Aprodu, M. (2013), Lazarsfeld-Mukai bundles and
  applications, Commutative Algebra, 1-23, Springer, New York.
\bibitem{AP1} Aprodu, M., Nagel, J., (2010), Koszul cohomology and
  algebraic geometry, University Lecture Series, Vol. \textbf{52},
  Amer. Math. Soc., Providence.
\bibitem{AB} Beauville, A., (2003), Some stable vector bundles with
  reducible theta divisor, Manuscripta Math. \textbf{110}, 343-349.
\bibitem{BUT} Butler, D., (1994), Normal generation of vector bundles
  over a curve, J. Diff. Geom. \textbf{39}, 1-34.
\bibitem{CC} Camere, C., (2008), About the stability of the tangent
  bundle of $\mathbb{P}^n$ restricted to a curve,
  C. R. Acad. Sci. Paris, Ser. I \textbf{346}, 421-426.
\bibitem{CC1} Camere, C., (2012), About the stability of the tangent
  bundle of $\mathbb{P}^n$ restricted to a surface,
  Math. Z. \textbf{271}, no. 1-2, 499-507.
\bibitem{LE} Ein, L., Lazarsfeld, R., (1992), Stability and
  restrictions of Picard bundles, with an application to the normal
  bundles of elliptic curves, Complex Projective Geometry,
  Lond. Math. Soc. Lect. Note Ser. \textbf{179}, 149-156.
\bibitem{LEM} Ein, L., Lazarsfeld, R., Mustopa, Y., (2013), Stability
  of syzygy bundles on an algebraic surface,
  Math. Res. Lett. \textbf{20} (1), 73-80. 
\bibitem{HF} Flenner, H., (1984), Restrictions of semistable bundles
  on projective varieties, Comment. Math. Helv. \textbf{59}, 635-650.
\bibitem{RH1} Hartshorne, R., (1977), Algebraic geometry, Graduate
  Texts in Mathematics, No. \textbf{52}, Springer-Verlag, New York.
\bibitem{RH} Hartshorne, R., (1980), Stable Reflexive Sheaves,
  Math. Ann. \textbf{254} (2), 121-176.
\bibitem{HL} Huybrechts, D., Lehn, M., (2010), The geometry of moduli
  spaces of sheaves. 2nd ed., Cambridge University Press, Cambridge.
\bibitem{GK} Kempf, G., (1992), Pulling back bundles, Pacific
  J. Math. \textbf{152} (2), 319-322.
\bibitem{RL} Lazarsfeld, R., (1986), Brill-Noether-Petri without
  degenerations, J. Diff. Geom. \textbf{23}, 299-307.
\bibitem{Laz} Lazarsfeld, R., (2004), Positivity in algebraic geometry
  I \& II, Ergebnisse der Mathematik und ihrer Grenzgebiete, vol. 48
  \& 49, Springer-Verlag, Berlin.
\bibitem{ML} Lelli-Chiesa, M., (2013), Stability of rank-3
  Lazarsfeld-Mukai bundles on K3 surfaces,
  Proc. Lond. Math. Soc. \textbf{107} (3), 451-479.
\bibitem{LP} Le Potier, J., (1997), Lectures on vector bundles,
  Cambridge Studies in Advanced Mathematics \textbf{54}, Cambridge
  University Press.
\bibitem{Mar} Maruyama, M., (1981), The theorem of
  Grauert-M¨ulich-Spindler, Math. Ann. \textbf{255} (3), 317-333.
\bibitem{EM} Mistretta, E., (2008), Stability of line bundle
  transforms on curves with respect to low codimensional subspaces,
  J. Lond. Math. Soc. \textbf{78} (2), 172-182.
\bibitem{Mu} Mukai, S., (1989), Biregular classification of Fano
  threefolds and Fano manifolds of coindex 3,
  Proc. Nat. Acad. Sci. USA \textbf{86}, 3000-3002.
\bibitem{NP} Narayanan, P., (2017), On the semistability of certain
  Lazarsfeld--Mukai bundles on abelian surfaces, Annali
  dell'Universita di Ferrara. doi:10.1007/s11565-017-0277-z.
\bibitem{OK} Okonek, C., Schneider, M., Spindler, H., (1980), Vector
  Bundles on Complex Projective Spaces, Progress in Mathematics,
  vol. 3, Birkhauser, Boston, MA.
\bibitem{RP} Paranjape, K., and Ramanan, S., (1988), On the canonical
  ring of a curve, Algebraic geometry and commutative algebra, Volume
  II, Kinokuniya, Tokyo, 503-516.
\bibitem{CV1} Voisin, C., (2002) Green’s generic syzygy conjecture for
  curves of even genus lying on a K3 surface, J. European
  Math. Soc. 4, 363-404 .
\bibitem{CV2} Voisin, C., (2005) Green’s canonical syzygy conjecture
  for generic curves of odd genus. Compositio Math. 141, 1163-1190.
\end{thebibliography}
\end{document}